\documentclass[a4paper,12pt,intlimits,oneside]{amsart}

\usepackage{enumerate}
\usepackage{mathrsfs}
\usepackage{amsfonts,amsmath}

\usepackage{latexsym,amssymb,amsthm}

\usepackage[bookmarksnumbered, colorlinks, plainpages]{hyperref}
\hypersetup{colorlinks=true,linkcolor=red, anchorcolor=greenF, citecolor=cyan, urlcolor=red, filecolor=magenta, pdftoolbar=true}

\textwidth14cm \textheight21cm \evensidemargin.2cm
\oddsidemargin.2cm

\addtolength{\headheight}{3.2pt}
\newcommand{\comment}[1]{}

\newcommand{\R}{{\mathbb R}}

\def\H{{\mathcal H}}
\def\Ha{{\mathscr H}}

\def\Hd{{\H^p_a(\mathbb C_+)}}
\def\Hau{{\mathscr H_\varphi}}

\newcounter{rea}
\setcounter{rea}{0}

\newcounter{rek}
\setcounter{rek}{0}

\newcounter{res}
\setcounter{res}{0}

\begin{document}
\title[]{Hausdorff operators on holomorphic Hardy spaces and applications}         
\author{Ha Duy Hung}    
\address{High School for Gifted Students, Hanoi National University of Education, 136 Xuan Thuy, Hanoi, Vietnam} 
\email{{\tt hunghaduy@gmail.com}}
\author{Luong Dang Ky}
\address{Department of Mathematics, Quy Nhon University, 
170 An Duong Vuong, Quy Nhon, Binh Dinh, Viet Nam} 
\email{{\tt luongdangky@qnu.edu.vn}}
\author{Thai Thuan Quang}
\address{Department of Mathematics, Quy Nhon University, 
170 An Duong Vuong, Quy Nhon, Binh Dinh, Viet Nam} 
\email{thaithuanquang@qnu.edu.vn}
\keywords{Hausdorff operator, Hardy space, holomorphic function, Hilbert transform}
\subjclass[2010]{47B38 (42B30, 46E15)}
\thanks{This research is funded by Vietnam National Foundation for Science and Technology Development (NAFOSTED) under grant number 101.02-2016.22.}

\begin{abstract} The aim of this paper is to characterize the nonnegative functions $\varphi$ defined on $(0,\infty)$ for which the Hausdorff operator 
	$$\Ha_\varphi f(z)= \int_0^\infty f\left(\frac{z}{t}\right)\frac{\varphi(t)}{t}dt$$
is bounded on the Hardy spaces of the upper half-plane $\mathcal H_a^p(\mathbb C_+)$, $p\in[1,\infty]$. The corresponding operator norms and their applications  are also given.
\end{abstract}

\maketitle
\newtheorem{theorem}{Theorem}[section]
\newtheorem{lemma}{Lemma}[section]
\newtheorem{proposition}{Proposition}[section]
\newtheorem{remark}{Remark}[section]
\newtheorem{corollary}{Corollary}[section]
\newtheorem{definition}{Definition}[section]
\newtheorem{example}{Example}[section]
\numberwithin{equation}{section}
\newtheorem{Theorem}{Theorem}[section]
\newtheorem{Lemma}{Lemma}[section]
\newtheorem{Proposition}{Proposition}[section]
\newtheorem{Remark}{Remark}[section]
\newtheorem{Corollary}{Corollary}[section]
\newtheorem{Definition}{Definition}[section]
\newtheorem{Example}{Example}[section]
\newtheorem*{theorema}{Theorem A}

\section{Introduction and the main result} 
\allowdisplaybreaks

Let $\varphi$ be a locally integrable function on $(0,\infty)$. The {\it Hausdorff operator} $H_\varphi$ is then defined for suitable functions $f$ on $\R$ by
\begin{equation}\label{real Hausdorff operator}
H_\varphi f(x)=\int_0^\infty f\left(\frac{x}{t}\right) \frac{\varphi(t)}{t} dt,\quad x\in\R.	
\end{equation}

The Hausdorff operator is an interesting operator in harmonic analysis. There are many classical operators in analysis which are special cases of the Hausdorff operator if one chooses suitable kernel functions $\varphi$, such as the classical Hardy operator, its adjoint operator, the Ces\`aro type operators, the Riemann-Liouville fractional integral operator,... See the survey article \cite{Li} and the references therein. In the recent years, there is an increasing interest in the study of boundedness of the Hausdorff operator and its commuting with the Hilbert transform on the real Hardy spaces and on the Lebesgue spaces, see for example \cite{An, AS, BBMM, BHS, HKQ, Li07, Li, LM, LM2, Xi}.

Let $\mathbb C_+$ be the upper half-plane in the complex plane. For $0<p\leq \infty$, the Hardy space $\mathcal H_a^p(\mathbb C_+)$ is defined as the set of all holomorphic functions $f$ on $\mathbb C_+$ such that
$$\|f\|_{\mathcal H_a^p(\mathbb C_+)}:= \sup_{y>0} \left(\int_{-\infty}^{\infty} |f(x+iy)|^p dx\right)^{1/p}<\infty$$
if $0<p<\infty$, and if $p=\infty$, then
$$\|f\|_{\mathcal H_a^\infty(\mathbb C_+)}:= \sup_{z\in\mathbb C_+} |f(z)|<\infty.$$

It is classical (see \cite{Du, Ga}) that if $f\in \H_a^p(\mathbb C_+)$, then $f$ has a {\it boundary value function} $f^*\in L^p(\R)$ defined by
$$f^*(x)= \lim_{y\to 0} f(x+iy),\quad \mbox{a.e.}\; x\in\R.$$

Let $p\in [1,\infty]$ and let $\varphi$ be a nonnegative function in $L^1_{\rm loc}(0,\infty)$ for which
\begin{equation}\label{main inequality}
\int_0^\infty t^{1/p-1}\varphi(t)dt<\infty.
\end{equation}
Then it is well-known (see \cite{An}) that $H_\varphi$ is bounded on $L^p(\R)$, and thus $H_\varphi(f^*)\in L^p(\R)$ for any boundary value function $f^*$ of a function $f$ in $\H_a^p(\mathbb C_+)$. A natural question arises is that whether the transformed function $H_\varphi(f^*)$ is also the boundary value function of a function in $\H_a^p(\mathbb C_+)$? In some special cases of $\varphi$ and $1<p<\infty$, using the spectral mapping theorem and the Hille-Yosida-Phillips theorem, Arvanitidis-Siskakis \cite{AS} and  Ballamoole-Bonyo-Miller-Millerstudied \cite{BBMM} studied and gave affirmative answers to this question.

In the present paper, we give an affirmative answer to the above question by studying a complex version of $H_\varphi$ defined by
$$\Ha_\varphi f(z)= \int_0^\infty f\left(\frac{z}{t}\right)\frac{\varphi(t)}{t}dt,\quad z\in \mathbb C_+.$$

Our main result reads as follows.

\begin{theorem}\label{main theorem}   
	Let $p\in [1,\infty]$ and let $\varphi$ be a nonnegative function in $L^1_{\rm loc}(0,\infty)$. Then $\Ha_\varphi$ is bounded on $\H_a^p(\mathbb C_+)$ if and only if (\ref{main inequality}) holds. Moreover, in that case, we obtain
 $$
 	\|\Ha_\varphi\|_{\H^p_a(\mathbb C_+)\to \H^p_a(\mathbb C_+)}=\int_0^\infty t^{1/p-1}\varphi(t)dt
 $$
 and, for any $f\in \H_a^p(\mathbb C_+)$,
 $$
 (\Ha_\varphi f)^*= H_\varphi(f^*).
 $$
\end{theorem}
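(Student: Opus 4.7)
My plan is to prove the theorem in three stages: (i) the upper bound on $\|\Ha_\varphi\|_{\H^p_a(\bC_+)\to\H^p_a(\bC_+)}$ together with sufficiency of (\ref{main inequality}) via Minkowski, (ii) the boundary identity $(\Ha_\varphi f)^* = H_\varphi(f^*)$ via dominated convergence, and (iii) the matching lower bound together with necessity via analytic near-eigenfunctions. Throughout write $\Lambda := \int_0^\infty t^{1/p-1}\varphi(t)\,dt$. For stage (i), the key observation is the sharp dilation identity $\|f(\cdot/t)\|_{\H^p_a(\bC_+)}=t^{1/p}\|f\|_{\H^p_a(\bC_+)}$ for every $t>0$, which is immediate by a change of variables in the defining integral of the $\H^p$ norm (with the convention $1/p=0$ at $p=\infty$). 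Writing $\Ha_\varphi f = \int_0^\infty f(\cdot/t)\,\varphi(t)/t\,dt$ as a Bochner-type integral and applying Minkowski's integral inequality then yields $\|\Ha_\varphi f\|_{\H^p_a(\bC_+)} \le \Lambda\,\|f\|_{\H^p_a(\bC_+)}$, giving simultaneously sufficiency of (\ref{main inequality}) and the upper estimate on the operator norm.

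For stage (ii), I would pass $y \to 0^+$ inside the integral defining $\Ha_\varphi f(x+iy)$. The pointwise convergence $f((x+iy)/t) \to f^*(x/t)$ holds for a.e.\ $x$ and every $t>0$, and the non-tangential maximal function $Nf^*$ provides the dominating majorant, since $|f(z/t)| \le Nf^*(x/t)$ for $z\to x$ non-tangentially and $Nf^*\in L^p(\R)$ by the classical non-tangential maximal theorem for Hardy spaces on $\bC_+$. Integrability of $t\mapsto Nf^*(x/t)\varphi(t)/t$ for a.e.\ $x$ is a consequence of the $L^p$-boundedness of $H_\varphi$ (see \cite{An}) applied to $Nf^*$, and dominated convergence then delivers the identity.

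Stage (iii) is the heart of the argument. The formal scaling fact to exploit is that $z\mapsto(-iz)^{-1/p}$, defined on $\bC_+$ via the principal branch, is a homogeneous eigenfunction of $\Ha_\varphi$ with eigenvalue exactly $\Lambda$; since this function does not lie in $\H^p_a(\bC_+)$, I would approximate it by the analytic family $f_a(z)=(1-iz)^{-a}$ for $a>1/p$, which does belong to $\H^p_a(\bC_+)$ with $\|f_a\|_{\H^p_a(\bC_+)}^p = \int_\R(1+x^2)^{-ap/2}\,dx$ of order $(ap-1)^{-1}$ as $a\downarrow 1/p$. Using the factorisation $(t-ix)^{-a}=(-ix)^{-a}(1+it/x)^{-a}$ (valid in the principal branch), dominated convergence in $x$ shows that the pointwise ratio $H_\varphi f_a^*(x)/f_a^*(x)$ tends, as $|x|\to\infty$, to $\Lambda_a := \int_0^\infty t^{a-1}\varphi(t)\,dt$, and monotone convergence in $t$ gives $\Lambda_a\to\Lambda$ as $a\downarrow 1/p$. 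Since the $L^p$-mass of $f_a^*$ concentrates at $|x|=\infty$ in that limit, this forces the global ratio $\|H_\varphi f_a^*\|_p/\|f_a^*\|_p$ to converge to $\Lambda$, and combining with the boundary identity of stage (ii) produces the reverse inequality $\|\Ha_\varphi\|\ge\Lambda$. Necessity follows by contrapositive: if $\Lambda=\infty$, the same scheme applied to truncations of $\varphi$ produces norm ratios tending to $\infty$, which forbids boundedness.

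The main obstacle I anticipate is that $\Lambda_a$ itself may be infinite for every $a>1/p$ (for instance when $\varphi$ has only borderline decay at infinity), even when $\Lambda<\infty$; this would derail the dominated-convergence step above. My plan to sidestep this is to first establish the norm identity for the truncated kernels $\varphi_\delta:=\varphi\chi_{[\delta,1/\delta]}$, where all auxiliary integrals are automatically finite and the scheme of stage (iii) runs unobstructed to yield $\|\Ha_{\varphi_\delta}\|_{\H^p_a(\bC_+)\to\H^p_a(\bC_+)}=\int_\delta^{1/\delta}t^{1/p-1}\varphi(t)\,dt$, and then passing $\delta\to 0^+$ using the comparison $\|\Ha_{\varphi_\delta}\|\le\|\Ha_\varphi\|$ (from Minkowski applied to the nonnegative remainder kernel $\varphi-\varphi_\delta$) together with the monotone convergence $\int_\delta^{1/\delta}t^{1/p-1}\varphi(t)\,dt \uparrow \Lambda$. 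A secondary subtlety arises in the dominated-convergence step of stage (ii) at $p=1$, where the Hardy--Littlewood maximal inequality is unavailable; there one instead relies on the fact that $Nf^*\in L^1(\R)$ for $f\in\H^1_a(\bC_+)$ by the Burkholder--Gundy--Silverstein characterisation of $\H^1$, so the argument goes through without modification.
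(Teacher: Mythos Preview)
Your stages (i) and (ii) are correct and in fact more direct than the paper's treatment: the paper first establishes mere boundedness via the nontangential maximal function and only later upgrades to the sharp constant using the boundary identity, whereas your dilation identity plus Minkowski gives the sharp upper bound in one stroke; likewise the paper proves $(\Ha_\varphi f)^*=H_\varphi(f^*)$ by a two-parameter approximation (truncation $\varphi_\delta$ and vertical shift $f_\sigma$), while your dominated-convergence argument with majorant $\mathcal M f(x/t)\,\varphi(t)/t$ is cleaner.

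The genuine gap is in stage (iii), at the step where you assert $\|\Ha_{\varphi_\delta}\|\le\|\Ha_\varphi\|$ ``from Minkowski applied to the nonnegative remainder kernel $\varphi-\varphi_\delta$''. Minkowski (the triangle inequality) applied to $\Ha_\varphi=\Ha_{\varphi_\delta}+\Ha_{\varphi-\varphi_\delta}$ gives only $\bigl|\,\|\Ha_\varphi\|-\|\Ha_{\varphi_\delta}\|\,\bigr|\le\|\Ha_{\varphi-\varphi_\delta}\|$, not the monotonicity you claim. The pointwise inequality $0\le\varphi_\delta\le\varphi$ does yield $\|H_{\varphi_\delta}\|_{L^p\to L^p}\le\|H_\varphi\|_{L^p\to L^p}$ on the \emph{real} side (via $|H_{\varphi_\delta}g|\le H_\varphi|g|$), but this does not transfer to $\H^p_a(\bC_+)$ since $|f^*|$ is not the boundary value of a holomorphic function. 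When $\Lambda<\infty$ your truncation can still be rescued by the reverse triangle inequality $\|\Ha_\varphi\|\ge\|\Ha_{\varphi_\delta}\|-\|\Ha_{\varphi-\varphi_\delta}\|$ together with your stage-(i) bound on the remainder---this is exactly how the paper passes from compactly supported $\varphi$ to general $\varphi$---but for \emph{necessity} you cannot assume $\Lambda<\infty$, so $\|\Ha_{\varphi-\varphi_\delta}\|$ has no a priori control and your contrapositive collapses.

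The paper closes this gap by abandoning any operator-norm comparison and working directly with the full kernel $\varphi$ on the test functions, exploiting a \emph{sign}. For $1<p<\infty$ it takes $f_\varepsilon(z)=(z+i\varepsilon)^{-1/p-\varepsilon}$ with exponent strictly below $1$, so that $\mathrm{Re}\,f_\varepsilon(z/t)>0$ for $x>0$; hence $|\Ha_\varphi f_\varepsilon(z)|\ge\int_0^\infty\mathrm{Re}\,f_\varepsilon(z/t)\,\varphi(t)t^{-1}\,dt$ with a nonnegative integrand, and this positivity yields a lower bound on $\|\Ha_\varphi f_\varepsilon\|$ directly in terms of $\int_0^{1/\varepsilon}t^{1/p-1+\varepsilon}\varphi(t)\,dt$, from which Fatou and $\varepsilon\to0$ give $\Lambda\lesssim\|\Ha_\varphi\|<\infty$. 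At $p=1$ the exponent exceeds $1$, the real-part positivity fails, and the paper treats this case separately with $f(z)=(z+i)^{-2}$, using positivity of the \emph{imaginary} part on $x>0$ instead. Your proposal neither isolates $p=1$ nor invokes any such sign argument, so the necessity direction is not established.
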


It should be pointed out that some main results in \cite{AS, BBMM} (see \cite[Theorems 3.1, 3.3 and 4.1]{AS} and \cite[Theorem 3.4]{BBMM}) can be viewed as special cases of Theorem \ref{main theorem} by choosing suitable kernel functions $\varphi$. In the setting of Hardy spaces $\mathcal H^p(\mathbb D)$ on the unit disk $\mathbb D=\{z\in \mathbb C: |z|<1\}$, Galanopoulos and Papadimitrakis (\cite[Theorems 2.3 and 2.4]{GP}) studied and obtained some similar results to Theorem \ref{main theorem} for $1<p<\infty$ while it is slightly different at the endpoints $p=1$ and $p=\infty$  (see also the survey article \cite{Li}).

Furthermore, if we denote by $\H^1(\R)$ the real Hardy space in the sense of Fefferman-Stein (see the last section), then by using Theorem \ref{main theorem}, we obtain the following result.

\begin{corollary}[see Theorem \ref{a lower bound}]
	Let $\varphi$ be a nonnegative function in $L^1_{\rm loc}(0,\infty)$ such that $H_\varphi$ is bounded on $\H^1(\R)$. Then, 
	$$\int_0^\infty \varphi(t)dt \leq \|H_\varphi\|_{\H^1(\R)\to \H^1(\R)}<\infty.$$
\end{corollary}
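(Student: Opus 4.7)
My plan is to deduce the corollary from Theorem \ref{main theorem} (applied at $p=1$) through the boundary-value correspondence between $\H_a^1(\mathbb{C}_+)$ and a subspace of $\H^1(\R)$. The starting observation is that for every $f \in \H_a^1(\mathbb{C}_+)$, the boundary value $f^*$ has Fourier transform supported in $[0,\infty)$; consequently $Hf^* = -i f^*$, so $f^* \in \H^1(\R)$ with
\[
\|f^*\|_{\H^1(\R)} = \|f^*\|_{L^1(\R)} + \|Hf^*\|_{L^1(\R)} = 2\|f^*\|_{L^1(\R)} = 2\|f\|_{\H_a^1(\mathbb{C}_+)}.
\]
The same identity will then apply also to $\Ha_\psi f$ whenever $\int_0^\infty \psi(t)\,dt < \infty$, since Theorem \ref{main theorem} guarantees that $\Ha_\psi f \in \H_a^1(\mathbb{C}_+)$ in that case.

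Next, I would truncate by setting $\varphi_n := \varphi \cdot \mathbf{1}_{[1/n, n]}$. Since $\varphi \in L^1_{\mathrm{loc}}(0,\infty)$, we have $\int_0^\infty \varphi_n(t)\,dt < \infty$, and Theorem \ref{main theorem} applied to $\varphi_n$ delivers both $\|\Ha_{\varphi_n}\|_{\H_a^1 \to \H_a^1} = \int_0^\infty \varphi_n(t)\,dt$ and the boundary identity $(\Ha_{\varphi_n} f)^* = H_{\varphi_n}(f^*)$ for every $f \in \H_a^1(\mathbb{C}_+)$. Applying the first paragraph's norm identity to both $f$ and $\Ha_{\varphi_n} f$, for every $f$ with $\|f\|_{\H_a^1} = 1$ I obtain
\[
\|\Ha_{\varphi_n} f\|_{\H_a^1} = \|H_{\varphi_n}(f^*)\|_{L^1(\R)} = \tfrac{1}{2}\|H_{\varphi_n}(f^*)\|_{\H^1(\R)} \leq \tfrac{1}{2}\|H_{\varphi_n}\|_{\H^1 \to \H^1}\,\|f^*\|_{\H^1(\R)} = \|H_{\varphi_n}\|_{\H^1 \to \H^1},
\]
so that $\int_0^\infty \varphi_n(t)\,dt = \|\Ha_{\varphi_n}\|_{\H_a^1 \to \H_a^1} \leq \|H_{\varphi_n}\|_{\H^1(\R) \to \H^1(\R)}$.

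The delicate remaining step, and the one I expect to be the main obstacle, is the norm comparison $\|H_{\varphi_n}\|_{\H^1(\R) \to \H^1(\R)} \leq \|H_\varphi\|_{\H^1(\R) \to \H^1(\R)}$ uniformly in $n$. I would approach this by restricting to a dense subspace of $\H^1(\R)$ of sufficiently regular test functions (smooth atoms, or mean-zero Schwartz functions): on such $g$ the integral defining $H_{\varphi_n}g$ converges absolutely, dominated convergence yields $H_{\varphi_n} g(x) \to H_\varphi g(x)$ pointwise a.e.\ as $n \to \infty$, and a lower-semicontinuity property of the $\H^1$ norm (via Fatou applied to the grand maximal-function characterization) should promote this to the bound $\|H_{\varphi_n} g\|_{\H^1} \leq \|H_\varphi g\|_{\H^1} \leq \|H_\varphi\|_{\H^1 \to \H^1}\,\|g\|_{\H^1}$. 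Combining this with the previous paragraph's estimate and then invoking the monotone convergence theorem (since $\varphi_n \uparrow \varphi$) yields
\[
\int_0^\infty \varphi(t)\,dt = \lim_n \int_0^\infty \varphi_n(t)\,dt \leq \|H_\varphi\|_{\H^1(\R) \to \H^1(\R)} < \infty,
\]
which is the desired conclusion.
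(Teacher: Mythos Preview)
Your approach has a genuine gap in the step you yourself flag as delicate: the comparison $\|H_{\varphi_n}\|_{\H^1\to\H^1}\le\|H_\varphi\|_{\H^1\to\H^1}$. The Fatou/lower-semicontinuity argument you sketch points the wrong way: if $H_{\varphi_n}g\to H_\varphi g$ pointwise, lower semicontinuity of the $\H^1$-norm would give $\|H_\varphi g\|_{\H^1}\le\liminf_n\|H_{\varphi_n}g\|_{\H^1}$, not the inequality you need. On a Banach lattice like $L^p$ one could instead use the pointwise domination $|H_{\varphi_n}g|\le H_\varphi|g|$, but $\H^1(\R)$ is not a lattice (indeed $|g|\notin\H^1$ for any nonzero real-valued $g$), so that route is blocked too. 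Without this comparison the truncation scheme does not close, and the finiteness of $\int_0^\infty\varphi$ remains unproven.

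The paper proceeds quite differently. It first quotes \cite[Theorem 3.3]{HKQ} to obtain $\int_0^\infty\varphi(t)\,dt<\infty$ directly from the boundedness hypothesis. Once finiteness is known, the sharp lower bound comes from an \emph{approximate eigenfunction} argument: for $f_\varepsilon(z)=(z+i)^{-1-\varepsilon}$ one shows (after a truncation-and-dilation reduction to $\operatorname{supp}\varphi\subset[\delta,1]$) that $H_\varphi(f_\varepsilon^*)-f_\varepsilon^*\int\varphi\to 0$ in norm as $\varepsilon\to 0$, which forces $\|H_\varphi\|\ge\int\varphi$ for \emph{every} equivalent $\H^1$-norm. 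Your identity $\|f^*\|_{\H^1}=2\|f\|_{\H_a^1}$ is specific to the norm $\|\cdot\|_{L^1}+\|H(\cdot)\|_{L^1}$, so even if the gap were repaired your argument would address only that one choice. A clean fix along your lines is to grant finiteness from \cite{HKQ}, drop the truncation, apply Theorem~\ref{main theorem} directly to $\varphi$, and then your chain of equalities yields $\int_0^\infty\varphi=\|\Ha_\varphi\|_{\H_a^1\to\H_a^1}\le\|H_\varphi\|_{\H^1\to\H^1}$ for that particular norm.
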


The above corollary is not only give an answer to a question posted by Liflyand \cite[Problem 4]{Li07}, but also give a lower bound for the norm of $H_\varphi$ on $\H^1(\R)$. Another corollary of Theorem \ref{main theorem} is:

\begin{corollary}[see Theorem \ref{commutes with the Hilbert transform}]
	Let $p\in (1,\infty)$ and let $\varphi$ be as in Theorem \ref{main theorem}. Then $H_\varphi$ is bounded on $L^p(\R)$ if and only if (\ref{main inequality}) holds. Moreover, in that case, 
	$$\|H_\varphi\|_{L^p(\R)\to L^p(\R)}= \int_0^\infty t^{1/p-1}\varphi(t)dt$$
	and $H_\varphi$ commutes with the Hilbert transform $H$ on $L^p(\R)$. 
\end{corollary}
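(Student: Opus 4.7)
I would split the proof into three parts, corresponding to the three assertions: a Minkowski-type upper bound (which gives the ``if'' direction), a matching lower bound extracted from Theorem \ref{main theorem} via boundary values (which gives the ``only if'' direction and sharpness), and finally the commutation with the Hilbert transform.

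First, for $t>0$ let $D_tf(x)=f(x/t)$, so that $\|D_tf\|_{L^p(\mathbb R)}=t^{1/p}\|f\|_{L^p(\mathbb R)}$. The identity
$$H_\varphi f=\int_0^\infty (D_tf)\,\frac{\varphi(t)}{t}\,dt$$
together with Minkowski's integral inequality immediately gives
$$\|H_\varphi f\|_{L^p(\mathbb R)}\le \|f\|_{L^p(\mathbb R)}\int_0^\infty t^{1/p-1}\varphi(t)\,dt,$$
so condition (\ref{main inequality}) is sufficient for boundedness on $L^p$, and the operator norm is at most $\int_0^\infty t^{1/p-1}\varphi(t)\,dt$.

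Second, for the matching lower bound (and the ``only if'' direction) I would exploit that, for $p\in[1,\infty]$, the boundary-value map $F\mapsto F^*$ is an isometric embedding of $\H_a^p(\mathbb C_+)$ into $L^p(\mathbb R)$. By the identity $(\Ha_\varphi F)^*=H_\varphi F^*$ furnished by Theorem \ref{main theorem}, the restriction of $H_\varphi$ to this boundary-value subspace is isometrically equivalent to $\Ha_\varphi$ on $\H_a^p(\mathbb C_+)$. Hence any bound on $H_\varphi:L^p\to L^p$ transfers, giving
$$\|H_\varphi\|_{L^p(\mathbb R)\to L^p(\mathbb R)}\ge \|\Ha_\varphi\|_{\H_a^p(\mathbb C_+)\to\H_a^p(\mathbb C_+)}=\int_0^\infty t^{1/p-1}\varphi(t)\,dt,$$
by Theorem \ref{main theorem}. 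Combined with the Minkowski bound this yields the norm equality; contrapositively, if (\ref{main inequality}) fails then Theorem \ref{main theorem} forces $\Ha_\varphi$ to be unbounded on $\H_a^p(\mathbb C_+)$, which by the same isometric identification rules out $H_\varphi$ being bounded on $L^p(\mathbb R)$.

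Third, for the commutation, I would use that a direct change of variable gives $H(D_tf)=D_t(Hf)$ (the Hilbert transform commutes with positive dilations), and that $H$ is bounded on $L^p(\mathbb R)$ for $p\in(1,\infty)$ by M.~Riesz. Since Minkowski shows the integral defining $H_\varphi f$ converges absolutely in $L^p$, the continuity of $H$ allows us to pull it inside, yielding
$$H(H_\varphi f)=\int_0^\infty H(D_tf)\,\frac{\varphi(t)}{t}\,dt=\int_0^\infty D_t(Hf)\,\frac{\varphi(t)}{t}\,dt=H_\varphi(Hf).$$
The main obstacle is not any single heavy step but rather the bookkeeping around the isometric identification in the second paragraph: one must verify that restricting a bounded operator on $L^p(\mathbb R)$ to the closed subspace $\{F^*:F\in\H_a^p(\mathbb C_+)\}$ and then transporting to $\H_a^p(\mathbb C_+)$ preserves the operator norm in the inequality direction needed, which is a soft functional-analytic remark but must be made explicitly so that Theorem \ref{main theorem} can be invoked as a black box.
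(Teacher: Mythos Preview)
Your upper bound via Minkowski and your commutation argument are both correct, and in fact more direct than the paper's. For the commutation, the paper goes through the holomorphic theory (Lemma~\ref{boundary value characterization 1}): it writes a real $f\in L^p(\R)$ as the real part of a boundary value $f+iH(f)$, uses Theorem~\ref{main theorem} to see that $H_\varphi(f+iH(f))$ is again a boundary value $g+iH(g)$, and reads off $g=H_\varphi f$ and $H(g)=H_\varphi(Hf)$. Your route via $H\circ D_t=D_t\circ H$ and passing the bounded operator $H$ through the $L^p$-convergent integral avoids this detour. Likewise, once (\ref{main inequality}) is known, your lower bound via the isometry $F\mapsto F^*$ and the identity $(\Hau F)^*=H_\varphi(F^*)$ is cleaner than the paper's, which revisits the truncations $\varphi_m$ and the supported-in-$[0,1]$ case (Lemma~\ref{key lemma for commuting relation with the Hilbert transform}(ii)).

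There is, however, a genuine circularity in your ``only if'' direction. Theorem~\ref{main theorem} asserts the identity $(\Hau F)^*=H_\varphi(F^*)$ only \emph{in the case} that (\ref{main inequality}) already holds; without (\ref{main inequality}) the paper does not even claim that $\Hau F$ is a well-defined holomorphic function (see the remarks preceding Lemma~\ref{nontangential maximal function characterization}). So you cannot invoke ``the same isometric identification'' to deduce that unboundedness of $\Hau$ on $\H_a^p(\mathbb C_+)$ forces unboundedness of $H_\varphi$ on $L^p(\R)$: the conjugation relation between the two operators is simply not available in that regime. The paper handles necessity by a separate, direct computation, testing $H_\varphi$ on $|x|^{-1/p-\epsilon}\chi_{\{|x|>1\}}$ and $|x|^{-1/p+\epsilon}\chi_{\{|x|<1\}}$ and letting $\epsilon\to 0$. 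If you prefer to stay within your framework, an easy repair is to truncate: set $\varphi_N=\varphi\,\chi_{[1/N,N]}$, note that $\varphi_N$ does satisfy (\ref{main inequality}) so your identification applies and gives $\|H_{\varphi_N}\|_{L^p\to L^p}=\int_{1/N}^{N} t^{1/p-1}\varphi(t)\,dt$, then use $\varphi\ge 0$ and $|H_{\varphi_N}f|\le H_\varphi|f|$ pointwise to get $\|H_{\varphi_N}\|\le\|H_\varphi\|$, and let $N\to\infty$.
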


Throughout the whole article, we use the symbol $A \lesssim B$ (or $B\gtrsim A$) means that $A\leq C B$ where $C$ is a positive constant which is independent of the main parameters, but it may vary from line to line. If $A \lesssim B$ and $B\lesssim A$, then we  write $A\sim B$.  For any $E\subset \R$, we denote by $\chi_E$ its characteristic function.


\section{Proof of Theorem \ref{main theorem}}

In the sequel, we always assume that $\varphi$ is a nonnegative function in $L^1_{\rm loc}(0,\infty)$. Also we remark that, for any $f\in \mathcal H_a^p(\mathbb C_+)$, the function $\Hau f$ is well-defined and  holomorphic on $\mathbb C_+$ provided (\ref{main inequality}) holds, since 
\begin{equation}\label{a pointwise estimate for f}
|f(x+iy)|\leq \left(\frac{2}{\pi y}\right)^{1/p} \|f\|_{\mathcal H_a^p(\mathbb C_+)}
\end{equation}
and 
\begin{equation}
|f'(x+iy)| \leq \frac{2}{y}\left(\frac{4}{\pi y}\right)^{1/p}\|f\|_{\mathcal H_a^p(\mathbb C_+)}
\end{equation}
for all $z=x+iy\in\mathbb C_+$. See Garnett's book \cite[p. 57]{Ga}.

Given an holomorphic function $f$ on $\mathbb C_+$, we define the {\it nontangential maximal function} of $f$ by
$$\mathcal M(f)(x)= \sup_{|t-x|<y} |f(t+iy)|, \quad x\in \R.$$

The following lemma is classical and can be found in \cite{Du, Ga}.

\begin{lemma}\label{nontangential maximal function characterization}
	Let $0<p<\infty$. Then:
	\begin{enumerate}[\rm (i)]
		\item For any $f\in \mathcal H_a^p(\mathbb C_+)$, we have
		$$\|f^*\|_{L^p(\R)}= \|f\|_{\mathcal H_a^p(\mathbb C_+)}\quad\mbox{and}\quad \lim_{y\to 0}\|f(\cdot+iy)- f^*(\cdot)\|_{L^p(\R)}=0.$$
		\item $f\in \mathcal H_a^p(\mathbb C_+)$ if and only if $\mathcal M(f)\in L^p(\R)$. Moreover,
		$$\|f\|_{\mathcal H_a^p(\mathbb C_+)}\sim \|\mathcal M(f)\|_{L^p(\R)}.$$		
	\end{enumerate}	
\end{lemma}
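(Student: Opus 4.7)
My plan is to follow the classical route in Duren and Garnett, which rests on two pillars: the subharmonicity of $|f|^p$ for holomorphic $f$ and $p > 0$, and the Poisson integral representation of $\mathcal H_a^p(\mathbb C_+)$ functions for $p \geq 1$ (with the range $0 < p < 1$ reduced to this case via canonical factorization).

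For part (i), I would first establish the existence of boundary values: the classical Fatou theorem on the half-plane guarantees that every $f \in \mathcal H_a^p(\mathbb C_+)$ admits non-tangential limits almost everywhere on $\R$, which I would deduce from the sub-mean-value property of $|f|^p$ combined with standard maximal function estimates. Granting this, the inequality $\|f^*\|_{L^p(\R)} \leq \|f\|_{\mathcal H_a^p(\mathbb C_+)}$ is immediate from Fatou's lemma applied to $|f(x+iy)|^p$ as $y \to 0$. For the reverse inequality I would exploit that $|f|^p$ is subharmonic, so that $y \mapsto \int |f(x+iy)|^p dx$ is non-increasing (via the Poisson integral inequality for nonnegative subharmonic functions together with Fubini), giving $\int |f(x+iy)|^p dx \to \int |f^*(x)|^p dx$ as $y \to 0$. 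Combining a.e.\ convergence with convergence of $L^p$ norms then yields the $L^p$ convergence statement via uniform integrability for $1 \leq p < \infty$, or a Brezis--Lieb type argument for general $0 < p < \infty$.

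For part (ii), the direction $\mathcal M(f) \in L^p(\R) \Rightarrow f \in \mathcal H_a^p(\mathbb C_+)$ is straightforward: since $(x, y)$ lies in the cone $\{(t, s) : |t - x| < s\}$ whenever $y > 0$, we have $|f(x+iy)| \leq \mathcal M(f)(x)$, so $\sup_{y>0}\int |f(x+iy)|^p dx \leq \|\mathcal M(f)\|_{L^p}^p$. For the converse, I would invoke the Poisson representation $f(x + iy) = (P_y * f^*)(x)$ (valid for $p \geq 1$) together with the classical pointwise domination of the non-tangential Poisson maximal function by the Hardy--Littlewood maximal function $M(f^*)$; this gives $\mathcal M(f)(x) \lesssim M(f^*)(x)$, and the $L^p$-boundedness of $M$ for $p > 1$ closes the argument. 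For $p = 1$ and $0 < p < 1$, where the maximal theorem fails directly, I would factor $f = B \cdot g$ with $B$ a Blaschke product and $g$ nonvanishing, set $h := g^{p/2} \in \mathcal H_a^2(\mathbb C_+)$, and transfer the estimate through the $L^2$ theory, using $|B| \leq 1$ so that $\mathcal M(f) \leq \mathcal M(g)$ and $\mathcal M(g) = \mathcal M(h)^{2/p}$.

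The main obstacle I anticipate is the rigorous treatment of the endpoint case $p = 1$ and the range $0 < p < 1$ in part (ii); the factorization approach demands verification that $g^{p/2}$ defines an element of $\mathcal H_a^2(\mathbb C_+)$ and that the non-tangential maximal function interacts cleanly with the Blaschke factor, both standard but technical points that motivate the paper's choice to cite the classical monographs rather than reproduce the proof.
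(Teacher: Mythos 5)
The paper gives no proof of this lemma---it simply cites Duren and Garnett---and your outline is a faithful sketch of exactly the classical argument those references contain (Fatou's theorem and subharmonicity of $|f|^p$ for (i); Poisson majorization, the Hardy--Littlewood maximal theorem, and Blaschke factorization for the ranges $p=1$ and $0<p<1$ in (ii)). The one step worth tightening is in (i): monotonicity of $y\mapsto\int_{\R}|f(x+iy)|^p\,dx$ only identifies the limit as $\|f\|_{\mathcal H_a^p(\mathbb C_+)}^p$, and equating that with $\|f^*\|_{L^p(\R)}^p$ additionally requires the upper bound $|f(x+iy)|^p\le \bigl(P_y*|f^*|^p\bigr)(x)$ from the Poisson representation (via factorization when $0<p<1$), which you list as a pillar but should invoke explicitly at that point rather than deducing the convergence from monotonicity alone.
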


\begin{lemma}\label{a lemma for p is infinity}
	Theorem \ref{main theorem} is true for $p=\infty$.
\end{lemma}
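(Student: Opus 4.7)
The plan is to treat the $p=\infty$ case by direct estimation, since $\H_a^\infty(\mathbb C_+)$ is the space of bounded holomorphic functions on $\mathbb C_+$ with the sup norm, and condition (\ref{main inequality}) collapses (with $1/p=0$) to $I:=\int_0^\infty \varphi(t)/t\,dt<\infty$. I would proceed in three steps: sufficiency and the upper bound on the norm, necessity and the matching lower bound through a test function, and the boundary value identity.

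For sufficiency, the key observation is that for every $t>0$ the map $z\mapsto z/t$ sends $\mathbb C_+$ into itself. Hence, for $f\in\H_a^\infty(\mathbb C_+)$ and $z\in\mathbb C_+$,
\[
|\Ha_\varphi f(z)|\le\int_0^\infty|f(z/t)|\,\frac{\varphi(t)}{t}\,dt\le\|f\|_{\H_a^\infty(\mathbb C_+)}\cdot I,
\]
giving both pointwise boundedness of $\Ha_\varphi f$ and $\|\Ha_\varphi\|_{\H_a^\infty(\mathbb C_+)\to\H_a^\infty(\mathbb C_+)}\le I$. For the necessity and matching lower bound, I would test on the constant function $f_0\equiv 1\in\H_a^\infty(\mathbb C_+)$, whose norm is one. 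A direct computation gives $\Ha_\varphi f_0\equiv I$, so if $\Ha_\varphi$ is bounded on $\H_a^\infty(\mathbb C_+)$ then $I$ must be finite and the operator norm is at least $I$. Combined with the previous estimate, this yields equality, and it also rules out boundedness when $I=\infty$.

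The boundary value identity $(\Ha_\varphi f)^*=H_\varphi(f^*)$ is the one place requiring a little care. Let $E\subset\R$ be the null set on which the nontangential boundary limit defining $f^*$ fails. For each $t>0$, dilation invariance of Lebesgue measure gives $|\{x:x/t\in E\}|=0$, so by Fubini the set $\{(x,t):x/t\in E\}$ is null in $\R\times(0,\infty)$. Consequently, for a.e.\ $x\in\R$ we have $f((x+iy)/t)\to f^*(x/t)$ as $y\to 0^+$ for a.e.\ $t>0$. The integrand is dominated by the integrable function $\|f\|_{\H_a^\infty(\mathbb C_+)}\varphi(t)/t$, so dominated convergence yields $\Ha_\varphi f(x+iy)\to H_\varphi(f^*)(x)$ almost everywhere; by uniqueness of a.e.\ boundary values, this is exactly the required identity.

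I do not expect a real obstacle here: the $p=\infty$ endpoint is soft, because the estimate $|f(z/t)|\le\|f\|_{\H_a^\infty(\mathbb C_+)}$ is trivially uniform in $t$ and $z$, so there is no need for the sophisticated tools (nontangential maximal function, Fefferman--Stein theory) that would appear for finite $p$. The only mildly delicate point is the Fubini bookkeeping that justifies the a.e.\ convergence of the integrand under the $y\to 0^+$ limit in the last step.
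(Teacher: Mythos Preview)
Your proof is correct and follows essentially the same route as the paper's: the upper bound comes from the trivial pointwise estimate $|f(z/t)|\le\|f\|_{\H_a^\infty(\mathbb C_+)}$, the lower bound and necessity from testing on the constant function $1$, and the boundary identity from dominated convergence. Your Fubini argument for the a.e.\ convergence of the integrand is in fact more carefully justified than the paper's version, which simply asserts dominated convergence for $x\ne 0$ without spelling out why $f((x+iy)/t)\to f^*(x/t)$ for a.e.\ $t$.
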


\begin{proof}
	Suppose that $\int_0^\infty t^{-1}\varphi(t)dt$ is finite. Then, for any $f\in \H^\infty_a(\mathbb C_+)$, 
	$$\|\Ha_\varphi f\|_{\mathcal H_a^\infty(\mathbb C_+)}=\sup_{z\in\mathbb C_+}\left|\int_0^\infty f\left(\frac{z}{t}\right) \frac{\varphi(t)}{t} dt\right|\leq \int_0^\infty t^{-1}\varphi(t)dt \|f\|_{\mathcal H^\infty_a(\mathbb C_+)}.$$
	Therefore, $\Ha_\varphi$ is bounded on $\mathcal H^\infty_a(\mathbb C_+)$, moreover,
	\begin{equation}\label{a lemma for p is infinity, 1}
		\|\Hau\|_{\H^\infty_a(\mathbb C_+)\to \H^\infty_a(\mathbb C_+)}\leq \int_0^\infty t^{-1}\varphi(t)dt.
	\end{equation}
		
	On the other hand, we have
	$$\|\Hau\|_{\H^\infty_a(\mathbb C_+)\to \H^\infty_a(\mathbb C_+)}\geq \frac{\|\Hau(1)\|_{\H^\infty_a(\mathbb C_+)}}{\|1\|_{\H^\infty_a(\mathbb C_+)}}=\int_0^\infty t^{-1}\varphi(t)dt.$$
	This, together with (\ref{a lemma for p is infinity, 1}), implies that
	$$\|\Hau\|_{\H^\infty_a(\mathbb C_+)\to \H^\infty_a(\mathbb C_+)}= \int_0^\infty t^{-1}\varphi(t)dt.$$
	Moreover, by the dominated convergence theorem, for any $x\ne 0$,
	$$(\Ha_\varphi f)^*(x)=\lim_{y\to 0}\int_0^\infty f\left(\frac{x}{t}+ \frac{y}{t}i\right) \frac{\varphi(t)}{t}dt = \int_0^1 f^*\left(\frac{x}{t}\right) \frac{\varphi(t)}{t}dt = H_\varphi(f^*)(x).$$

	Conversely, suppose that $\Hau$ is bounded on $\H^\infty_a(\mathbb C_+)$. As the function $f(z)\equiv 1$ is in  $\H^\infty_a(\mathbb C_+)$, we obtain that $\Hau f= \int_0^1 t^{-1}\varphi(t)dt<\infty$.  
\end{proof}

\begin{lemma}\label{key lemma}
	Let $p\in [1,\infty)$ and let $\varphi$ be such that (\ref{main inequality}) holds. Then 
	\begin{enumerate}[\rm (i)]
		\item $\Hau$ is bounded on $\Hd$, moreover,
		$$\|\Hau\|_{\Hd\to \Hd}\leq \int_0^\infty t^{1/p-1}\varphi(t)dt.$$
		\item If supp $\varphi\subset [0,1]$, then
		$$\|\Hau\|_{\Hd\to \Hd} = \int_0^1 t^{1/p-1}\varphi(t)dt.$$
		\item For any $f\in \Hd$, we have
		$$(\Ha_\varphi f)^*= H_\varphi(f^*).$$		
	\end{enumerate}		
\end{lemma}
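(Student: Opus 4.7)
My plan is to prove the three parts in the order (i), (iii), (ii). I expect (ii), the matching lower bound, to be the main obstacle.

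For (i), I start from Minkowski's integral inequality in $L^p(\R)$:
\[
\|\Hau f(\cdot + iy)\|_{L^p(\R)} \leq \int_0^\infty \left\|f\!\left(\tfrac{\cdot+iy}{t}\right)\right\|_{L^p(\R)}\frac{\varphi(t)}{t}\,dt.
\]
A rescaling $u = x/t$ shows $\|f((\cdot+iy)/t)\|_{L^p(\R)} = t^{1/p}\|f(\cdot + iy/t)\|_{L^p(\R)} \leq t^{1/p}\|f\|_{\Hd}$, and the desired norm bound follows on taking the supremum in $y > 0$.

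For (iii), since (i) ensures $\Hau f \in \Hd$, Lemma~\ref{nontangential maximal function characterization}(i) already gives $\Hau f(\cdot+iy) \to (\Hau f)^*$ in $L^p(\R)$ as $y\to 0^+$. The remaining task is to verify that the same translates converge in $L^p(\R)$ to $H_\varphi(f^*)$. Applying Minkowski to the difference yields
\[
\|\Hau f(\cdot + iy) - H_\varphi(f^*)\|_{L^p(\R)} \leq \int_0^\infty t^{1/p-1}\varphi(t)\|f(\cdot + iy/t) - f^*\|_{L^p(\R)}\,dt,
\]
and the integrand tends to $0$ pointwise in $t$ by Lemma~\ref{nontangential maximal function characterization}(i), while being dominated by the integrable majorant $2 t^{1/p-1}\varphi(t)\|f\|_{\Hd}$; dominated convergence finishes the step.

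The main obstacle is (ii). The heuristic is that $(-iz)^{-1/p}$ is a formal eigenfunction of $\Hau$ with eigenvalue $\int_0^1 t^{1/p-1}\varphi(t)\,dt$, but it fails to belong to $\Hd$. My plan is to regularize it via the family $f_\delta(z) = (z+i)^{-1/p-\delta}$ with $\delta>0$ small, which does lie in $\Hd$; its norm is evaluated exactly through the Beta function and blows up like $\delta^{-1/p}$ as $\delta \to 0^+$. Invoking (iii), $\|\Hau f_\delta\|_{\Hd} = \|H_\varphi(f_\delta^*)\|_{L^p(\R)}$, so it suffices to analyze
\[
\Hau f_\delta(x) = \int_0^1 t^{1/p+\delta-1}(x+it)^{-1/p-\delta}\varphi(t)\,dt.
\]
For $|x|>N$ large one expects the pointwise bound $|\Hau f_\delta(x)| \geq (1-\epsilon_N)|x|^{-1/p-\delta}\int_0^1 t^{1/p+\delta-1}\varphi(t)\,dt$ with $\epsilon_N\to 0$ uniformly in small $\delta$; integrating over $|x|>N$, dividing by $\|f_\delta\|_{\Hd}^p$, and then sending $\delta \to 0^+$ followed by $N\to\infty$ produces the matching lower bound. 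The technical step is controlling the complex phase of $(x+it)^{-1/p-\delta}$ as $t$ ranges over $(0,1]$, to guarantee that the phases align sufficiently in the asymptotic regime $|x|\to\infty$ to avoid destructive interference.
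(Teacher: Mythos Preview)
Your proposal is correct, and in parts (i) and (iii) it is genuinely more direct than the paper's argument. For (i) the paper first proves only $\|\Hau\|\lesssim\int t^{1/p-1}\varphi$ via the nontangential maximal function estimate $\mathcal M(\Hau f)\le H_\varphi(\mathcal Mf)$, and then bootstraps to the sharp constant by invoking (iii); your single application of Minkowski on horizontal slices gives the sharp constant immediately and avoids that circularity. For (iii) the paper goes through a double approximation (truncating $\varphi$ to $\varphi_\delta=\varphi\chi_{[\delta,\infty)}$ and shifting $f$ to $f_\sigma(z)=f(z+i\sigma)\in\H_a^p\cap\H_a^\infty$), invokes the already-proved $p=\infty$ case, and passes to the limit; your dominated-convergence argument on $\|\Hau f(\cdot+iy)-H_\varphi(f^*)\|_{L^p}$ is both shorter and self-contained.

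For (ii) both you and the paper test against $f_\varepsilon(z)=(z+i)^{-1/p-\varepsilon}$, but the implementations differ. The paper first truncates $\varphi$ to $[\delta,1]$ and shows, via a mean-value estimate on $\phi_{\varepsilon,z}(t)=t^\varepsilon(z+ti)^{-1/p-\varepsilon}$, that $\|\mathscr H_{\varphi_\delta}f_\varepsilon-c_\delta f_\varepsilon\|/\|f_\varepsilon\|\to 0$ as $\varepsilon\to 0$, i.e.\ $f_\varepsilon$ is an approximate eigenvector for $\mathscr H_{\varphi_\delta}$ with approximate eigenvalue $c_\delta=\int_\delta^1 t^{1/p-1}\varphi\,dt$; the truncation is essential there because the derivative bound carries a factor $\delta^{-1-1/p}$. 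Your route instead extracts a pointwise tail lower bound for $|x|>N$ via the observation that the phases of $(x+it)^{-1/p-\delta}$, $t\in(0,1]$, cluster within an arc of width $(1/p+\delta)\arctan(1/N)$, so no truncation of $\varphi$ is needed. Both arguments are valid; the paper's is slightly cleaner operator-theoretically (approximate eigenvector), while yours trades the truncation step for the phase-alignment estimate and the asymptotic $\|f_\delta\|_{\Hd}^p\sim 2/(p\delta)$. One notational slip: what you display as $\Hau f_\delta(x)$ is really $H_\varphi(f_\delta^*)(x)=(\Hau f_\delta)^*(x)$, since $x\in\R$ lies on the boundary.
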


\begin{proof}
	(i) For any $f\in \mathcal H^p_a(\mathbb C_+)$, we have
	\begin{eqnarray*}
		\mathcal M(\Ha_\varphi f)(x) &=& \sup_{|u-x|<y} \left|\int_0^\infty f\left(\frac{u+iy}{t}\right)\frac{\varphi(t)}{t}dt\right|\\
		&\leq& \int_0^\infty \sup_{|\frac{u}{t}-\frac{x}{t}|<\frac{y}{t}} \left|f\left(\frac{u}{t}+\frac{y}{t}i\right)\right|\frac{\varphi(t)}{t}dt = H_{\varphi}(\mathcal M f)(x)
	\end{eqnarray*}	
	for all $x\in \mathbb R$. Therefore, by the Minkowski inequality and Lemma \ref{nontangential maximal function characterization}(ii),
	\begin{eqnarray*}
		\|\Ha_\varphi f\|_{\mathcal H^p_a(\mathbb C_+)}\lesssim  \|\mathcal M(\Ha_\varphi f)\|_{L^p(\mathbb R)} &\leq&  \|H_{\varphi}(\mathcal M f)\|_{L^p(\mathbb R)}\\
		&\leq& \int_0^\infty \left(\int_{\R} \left|\mathcal M f\left(\frac{x}{t}\right)\right|^p dx\right)^{1/p} \frac{\varphi(t)}{t} dt \\
		&=& \int_0^\infty t^{1/p-1}\varphi(t) dt \|\mathcal M f\|_{L^p(\mathbb R)}\\
		&\lesssim&  \int_0^\infty t^{1/p-1}\varphi(t) dt \|f\|_{\mathcal H^p_a(\mathbb C_+)}.
	\end{eqnarray*}
	This proves that $\Hau$ is bounded on $\mathcal H^p_a(\mathbb C_+)$, moreover,
	\begin{equation}\label{key lemma, 0}
		\|\Hau\|_{\Hd\to \Hd}\lesssim \int_0^\infty t^{1/p-1}\varphi(t) dt.
	\end{equation}

	In order to show 
	\begin{equation}\label{key lemma, 1}
	\|\Hau\|_{\Hd\to \Hd}\leq \int_0^\infty t^{1/p-1}\varphi(t) dt,	
	\end{equation}
	let us  first assume that (iii) is proved. Then, by Lemma \ref{nontangential maximal function characterization}(i) and the Minkowski inequality, we get
	\begin{eqnarray*}
		\left\|\Hau f\right\|_{\mathcal H_a^p(\mathbb C_+)} = \left\|(\Hau f)^*\right\|_{L^p(\mathbb R)} &=&\left\|H_\varphi (f^* )\right\|_{L^p(\mathbb R)}\\
		&\leq& \int_0^\infty \left(\int_{\R} \left|f^*\left(\frac{x}{t}\right)\right|^p dx\right)^{1/p} \frac{\varphi(t)}{t} dt\\
		&=& \|f^*\|_{L^p(\mathbb R)} \int_0^\infty t^{1/p-1}\varphi(t)dt \\
		&=& \|f\|_{\mathcal H_a^p(\mathbb C_+)} \int_0^\infty t^{1/p-1}\varphi(t)dt.
	\end{eqnarray*}
	This proves that (\ref{key lemma, 1}) holds.
	
	\vskip 0.3cm
	
(ii)	Let $\delta\in (0,1)$ be arbitrary and let $\varphi_\delta(t)= \varphi(t)\chi_{[\delta,\infty)}(t)$ for all $t\in (0,\infty)$. Since (\ref{key lemma, 1}) holds, we see that
$$
\|\mathscr H_{\varphi_\delta}\|_{\mathcal H^p_a(\mathbb C_+) \to \mathcal H^p_a(\mathbb C_+)} \leq \int_0^\infty t^{1/p-1}\varphi_\delta(t)dt = \int_\delta^1 t^{1/p-1}\varphi(t)dt  <\infty
$$
and
\begin{equation}\label{the truncation}
\|\Hau -\mathscr H_{\varphi_\delta}\|_{\mathcal H^p_a(\mathbb C_+) \to \mathcal H^p_a(\mathbb C_+)} \leq \int_0^\infty t^{1/p-1}[\varphi(t)-\varphi_\delta(t)] dt= \int_0^\delta t^{1/p-1}\varphi(t)dt.
\end{equation}

For any $\varepsilon>0$, we define the  function $f_\varepsilon: \mathbb C_+ \to\mathbb C$ by
$$f_\varepsilon(z)= \frac{1}{(z+i)^{1/p+\varepsilon}},$$
where, and in what follows, $\zeta^{1/p+\varepsilon}=|\zeta|^{1/p+\varepsilon} e^{i(1/p+\varepsilon)\arg \zeta}$ for all $\zeta\in\mathbb C$. Then
\begin{equation}\label{norm of f}
\|f_\varepsilon\|_{\mathcal H^p_a(\mathbb C_+)}= \left(\int_{-\infty}^{\infty}\frac{1}{\sqrt{x^2+1}^{1+p\varepsilon}} dx\right)^{1/p}<\infty.
\end{equation}

For all $z= x+iy\in\mathbb C_+$, we have
$$\mathscr H_{\varphi_\delta}(f_\varepsilon)(z) - f_\varepsilon(z)\int_0^\infty t^{1/p-1}\varphi_\delta(t)dt = \int_{\delta}^{1}[\phi_{\varepsilon,z}(t)-\phi_{\varepsilon,z}(1)] t^{1/p-1} \varphi(t)dt,$$
where $\phi_{\varepsilon,z}(t):=  \frac{t^{\varepsilon}}{(z+ ti)^{1/p+\varepsilon}}$. For any $t\in [\delta,1]$, a simple calculus gives
\begin{eqnarray*}
	|\phi_{\varepsilon,z}(t)-\phi_{\varepsilon,z}(1)| &\leq& |t-1|\sup_{s\in [\delta,1]}|\phi_{\varepsilon,z}'(s)|\\
	&\leq& \frac{\varepsilon \delta^{-1-1/p}}{\sqrt{x^2+ 1}^{1/p+\varepsilon}} + \frac{(1/p+\varepsilon) \delta^{-1-1/p}}{\sqrt{x^2+1}^{1+1/p+\varepsilon}}.
\end{eqnarray*}
This, together with (\ref{norm of f}),  yields
\begin{eqnarray}\label{an estimate for the norm}
	&&\frac{\left\|\mathscr H_{\varphi_\delta}(f_\varepsilon) - f_\varepsilon \int_0^\infty t^{1/p-1}\varphi_\delta(t)dt \right\|_{\mathcal H^p_a(\mathbb C_+)}}{\|f_\varepsilon\|_{\mathcal H^p_a(\mathbb C_+)}}\\
	&\leq& \int_{\delta}^{1} t^{1/p-1}\varphi(t)dt\left[\varepsilon \delta^{-1-1/p}+ \frac{(1/p+\varepsilon) \delta^{-1-1/p}\left(\int_{-\infty}^{\infty} \frac{1}{\sqrt{x^2+1}^{p+1}}\right)^{1/p}}{\left(\int_{-\infty}^{\infty}\frac{1}{\sqrt{x^2+1}^{1+p\varepsilon}}dx\right)^{1/p}}\right] \to 0\nonumber
\end{eqnarray}
as $\varepsilon\to 0$. As a consequence,
$$\int_{\delta}^{1} t^{1/p-1}\varphi(t)dt=\int_0^\infty t^{1/p-1}\varphi_\delta(t)dt \leq \|\mathscr H_{\varphi_\delta}\|_{\mathcal H^p_a(\mathbb C_+)\to \mathcal H^p_a(\mathbb C_+)}.$$
This, combined with (\ref{the truncation}), allows us to conclude that 
$$\|\Hau\|_{\mathcal H^p_a(\mathbb C_+)\to \mathcal H^p_a(\mathbb C_+)}\geq \int_0^1 t^{1/p-1}\varphi(t)dt - 2 \int_0^\delta t^{1/p-1}\varphi(t)dt\to \int_0^1 t^{1/p-1}\varphi(t)dt$$
as $\delta \to 0$ since $\int_0^1 t^{1/p-1}\varphi(t)dt<\infty$. Hence, by (\ref{key lemma, 1}),
$$\|\Hau\|_{\mathcal H^p_a(\mathbb C_+)\to \mathcal H^p_a(\mathbb C_+)}=  \int_0^1 t^{1/p-1}\varphi(t)dt.$$
	
(iii) For any  $\sigma>0$, it follows from (\ref{a pointwise estimate for f}) that the function
$$f_\sigma(z):= f(z+  i\sigma)$$
is in $\mathcal H_a^p(\mathbb C_+)\cap \mathcal H^\infty_a(\mathbb C_+)$. Let $\delta$ and $\varphi_\delta$ be as in (ii). Noting that
	$$\int_0^\infty t^{-1}\varphi_\delta(t)dt\leq \delta^{-1/p}\int_{\delta}^{\infty}t^{1/p-1}\varphi(t)dt<\infty,$$
Lemma \ref{a lemma for p is infinity}(ii) gives $(\mathscr H_{\varphi_\delta}(f_\sigma))^*= H_{\varphi_\delta}(f_\sigma^*)$. Therefore, by Lemma \ref{nontangential maximal function characterization}(i), \cite[Theorem 1]{An} and (\ref{key lemma, 0}), we obtain that  
	\begin{eqnarray*}
		&&\|(\Hau f)^* - H_\varphi(f^*)\|_{L^p(\R)}\\
		&\leq& \|\mathscr H_{\varphi-\varphi_\delta} f\|_{\Hd} + \|H_{\varphi-\varphi_\delta}(f^*)\|_{L^p(\R)} + \\
		&&  +\|\mathscr H_{\varphi_\delta}(f- f_\sigma)\|_{\Hd} + \|H_{\varphi_\delta}(f^* - f_\sigma^*)\|_{L^p(\R)}\\
		&\lesssim& \|f\|_{\Hd} \int_0^\infty t^{1/p-1}[\varphi(t)-\varphi_\delta(t)]dt + \|f^*-f_\sigma^*\|_{L^p(\R)}\int_0^\infty t^{1/p-1}\varphi_\delta(t)dt\\
		&\lesssim& \|f\|_{\Hd} \int_0^\delta t^{1/p-1}\varphi(t)dt + \|f^*-f_\sigma^*\|_{L^p(\R)}\int_0^\infty t^{1/p-1}\varphi(t)dt\to 0
	\end{eqnarray*}
	as $\sigma\to 0$ and $\delta\to 0$. This completes the proof of Lemma \ref{key lemma}.
		
\end{proof}

Now we are ready to give the proof of Theorem \ref{main theorem}.

\begin{proof}[Proof of Theorem \ref{main theorem}]
	By Lemmas \ref{a lemma for p is infinity} and \ref{key lemma}, it suffices to prove that
	\begin{equation}\label{main theorem, 0}
		\int_0^\infty t^{1/p-1} \varphi(t)dt \leq \|\Hau\|_{\Hd \to \Hd}
	\end{equation}
	whenever $\Hau$ is bounded on $\Hd$ for $1\leq p<\infty$.
		
	 Indeed, we first claim that
\begin{equation}\label{necessary condition}
\int_0^\infty t^{1/p-1} \varphi(t)dt <\infty.
\end{equation}

Assume (\ref{necessary condition}) holds for a moment. 

For any $m>0$, we define $\varphi_m(t)= \varphi(mt)\chi_{(0,1]}(t)$ for all $t\in (0,\infty)$. Then, by Lemma \ref{key lemma}(i), we see that
\begin{eqnarray}\label{main theorem, 1}
	\left\|\Hau -\mathscr H_{\varphi_m\left(\frac{\cdot}{m}\right)}\right\|_{\Hd \to \Hd} &=& \left\|\mathscr H_{\varphi-\varphi_m\left(\frac{\cdot}{m}\right)}\right\|_{\Hd \to \Hd}\nonumber\\
	&\leq& \int_0^\infty t^{1/p-1} \left[\varphi(t)- \varphi_m\left(\frac{t}{m}\right)\right] dt \nonumber\\
	&=& \int_m^\infty t^{1/p-1}\varphi(t)dt<\infty.
\end{eqnarray}

Noting that
$$\left\|f\left(\frac{\cdot}{m}\right)\right\|_{\Hd}= m^{1/p} \|f(\cdot)\|_{\Hd}\quad\mbox{and}\quad \mathscr H_{\varphi_m\left(\frac{\cdot}{m}\right)} f= \mathscr H_{\varphi_m} f\left(\frac{\cdot}{m}\right)$$
for all $f\in\Hd$, Lemma \ref{key lemma}(ii) gives
\begin{eqnarray*}
	\left\|\mathscr H_{\varphi_m\left(\frac{\cdot}{m}\right)}\right\|_{\Hd \to \Hd} &=& m^{1/p} \left\|\mathscr H_{\varphi_m}\right\|_{\Hd \to \Hd}\\
	&=& m^{1/p} \int_0^1 t^{1/p-1} \varphi_m(t)dt = \int_0^m t^{1/p-1} \varphi(t)dt.
\end{eqnarray*}
Combining this with (\ref{main theorem, 1}) allows us to conclude that
$$\|\Hau\|_{\Hd\to \Hd}\geq \int_0^\infty t^{1/p-1} \varphi(t)dt - 2 \int_m^\infty t^{1/p-1} \varphi(t)dt \to \int_0^\infty t^{1/p-1} \varphi(t)dt$$
as $m\to\infty$ since $\int_0^\infty t^{1/p-1} \varphi(t)dt<\infty$. This proves (\ref{main theorem, 0}).

\vskip 0.3cm

Now we return to prove (\ref{necessary condition}). Indeed, we consider the following two cases.

{\bf Case 1:} $p=1$. Take $f(z)= \frac{1}{(z+i)^2}$ for all $z\in\mathbb C_+$. Then
$$\|f\|_{\mathcal H^1_a(\mathbb C_+)}= \int_{-\infty}^{\infty} \frac{1}{x^2+1} dx<\infty.$$
Therefore, by the Fatou lemma, we get  
\begin{eqnarray*}
	\infty > \|\Hau f\|_{\mathcal H^1_a(\mathbb C_+)} &=& \sup_{y>0} \int_{-\infty}^{\infty} \left|\int_0^\infty \frac{1}{\left[\frac{x}{t} + i\left(\frac{y}{t}+1\right)\right]^2} \frac{\varphi(t)}{t}dt\right|dx\\
	&\geq& 2  \sup_{y>0} \int_{0}^{\infty} dx \int_{0}^{\infty}\frac{\frac{x}{t}\left(\frac{y}{t}+1\right)}{\left[\left(\frac{x}{t}\right)^2 + \left(\frac{y}{t}+1\right)^2\right]^2} \frac{\varphi(t)}{t}dt\\
	&\geq& 2 \int_{0}^{\infty} dx \int_{0}^{\infty} \frac{\frac{x}{t}}{\left[\left(\frac{x}{t}\right)^2 + 1\right]^2} \frac{\varphi(t)}{t}dt\\
	&=& 2 \int_{0}^{\infty}\frac{u}{[u^2+1]^2} du \int_0^\infty \varphi(t)dt.
\end{eqnarray*}
 This proves (\ref{necessary condition}).

\vskip 0.15cm

{\bf Case 2:} $1<p<\infty$. For any $0<\varepsilon<1-1/p$, take
$$f_\varepsilon(z)= \left(\frac{1}{z+ i \varepsilon }\right)^{1/p+\varepsilon}$$
for all $z\in \mathbb C_+$. Then
\begin{equation}\label{norm of f for p>1}
\|f_\varepsilon\|_{\mathcal H^p_a(\mathbb C_+)}= \left(\int_{-\infty}^{\infty} \frac{1}{{\sqrt{x^2+\varepsilon^2}}^{1+p\varepsilon}} dx\right)^{1/p} <\infty
\end{equation}
and
\begin{eqnarray*}
	\infty>\|\Hau(f_\varepsilon)\|^p_{\mathcal H^p_a(\mathcal C_+)} &=& \sup_{y>0}\int_{-\infty}^{\infty}\left|\int_0^\infty \left(\frac{1}{\frac{x}{t}+ i\left(\frac{y}{t}+\varepsilon\right)}\right)^{1/p+\varepsilon} \frac{\varphi(t)}{t} dt \right|^p dx\\
	&\geq& \int_0^\infty \left|\int_0^\infty \frac{\frac{x}{t}}{\sqrt{\left(\frac{x}{t}\right)^2+\varepsilon^2}} \frac{1}{\sqrt{\left(\frac{x}{t}\right)^2+ \varepsilon^2}^{1/p+\varepsilon}}\frac{\varphi(t)}{t} dt\right|^p dx,
\end{eqnarray*}
where we used the Fatou lemma and the fact that
$$\mbox{Re} \left(\frac{1}{\frac{x}{t}+ i\left(\frac{y}{t}+\varepsilon\right)}\right)^{1/p+\varepsilon}\geq \frac{\frac{x}{t}}{\sqrt{\left(\frac{x}{t}\right)^2+\left(\frac{y}{t}+\varepsilon\right)^2}} \frac{1}{\sqrt{\left(\frac{x}{t}\right)^2+\left(\frac{y}{t}+\varepsilon\right)^2}^{1/p+\varepsilon}}$$
for all $x,y,t>0$ since $0<1/p+\varepsilon<1$. This, together with (\ref{norm of f for p>1}), gives
\begin{eqnarray*}
	\|\Hau\|^p_{\mathcal H^p_a(\mathbb C_+)\to \mathcal H^p_a(\mathbb C_+)} &\geq& \frac{\|\Hau(f_\varepsilon)\|^p_{\mathcal H^p_a(\mathbb C_+)}}{\|f_\varepsilon\|^p_{\mathcal H^p_a(\mathbb C_+)}}\\
	&\geq& \frac{\int_1^\infty \left|\int_0^{1/\varepsilon} \frac{\frac{x}{t}}{\sqrt{\left(\frac{x}{t}\right)^2+\varepsilon^2}} \frac{1}{\sqrt{\left(\frac{x}{t}\right)^2+ \varepsilon^2}^{1/p+\varepsilon}}\frac{\varphi(t)}{t} dt\right|^p dx}{2 \int_0^\infty \frac{1}{{\sqrt{x^2+\varepsilon^2}}^{1+p\varepsilon}} dx}\\
	&\geq& \frac{1}{2^{\frac{3+ p(1+\varepsilon)}{2}}} \left(\int_{0}^{1/\varepsilon} t^{1/p-1+\varepsilon} \varphi(t)dt\right)^p \frac{\int_1^\infty \frac{1}{x^{1+p\varepsilon}}dx}{\varepsilon^{-p\varepsilon}\int_0^\infty \frac{1}{\sqrt{x^2+1}^{1+p\varepsilon}}dx}.
\end{eqnarray*}
Hence,
$$\int_{0}^{1/\varepsilon} t^{1/p-1+\varepsilon} \varphi(t)dt \leq 2^{\frac{3+ p(1+\varepsilon)}{2p}} \varepsilon^{-\varepsilon} \left(\frac{\int_0^\infty \frac{1}{\sqrt{x^2+1}^{1+p\varepsilon}}dx}{\int_1^\infty \frac{1}{x^{1+p\varepsilon}}dx}\right)^{1/p} \|\Hau\|_{\mathcal H^p_a(\mathbb C_+)\to \mathcal H^p_a(\mathbb C_+)}.$$
Letting $\varepsilon \to 0$, we obtain 
$$
\int_0^\infty t^{1/p-1} \varphi(t)dt \leq 2^{\frac{3+p}{2p}}\|\Hau\|_{\mathcal H^p_a(\mathbb C_+)\to \mathcal H^p_a(\mathbb C_+)}<\infty.
$$
This proves (\ref{necessary condition}), and thus ends the proof of Theorem \ref{main theorem}.

\end{proof}


\section{Some applications}

Let $1\leq p<\infty$, we define (see \cite{St}) the {\it Hilbert transform} of $f\in L^p(\R)$ by
$$H(f)(x):= \frac{1}{\pi}{\rm p. v.}\int_{-\infty}^{\infty} \frac{f(y)}{x-y}dy, \quad x\in\R.$$

\begin{theorem}\label{commutes with the Hilbert transform}   
	Let $p\in (1,\infty)$ and let $\varphi$ be as in Theorem \ref{main theorem}. Then $H_\varphi$ is bounded on $L^p(\R)$ if and only if (\ref{main inequality}) holds. Moreover, in that case, 
	$$\|H_\varphi\|_{L^p(\R)\to L^p(\R)}= \int_0^\infty t^{1/p-1}\varphi(t)dt$$
	and $H_\varphi$ commutes with the Hilbert transform $H$ on $L^p(\R)$.    
\end{theorem}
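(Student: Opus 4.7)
The plan is to derive all three claims (boundedness equivalence, sharp norm, commutation with $H$) from Theorem \ref{main theorem}, combined with Minkowski's integral inequality and the dilation-invariance of the Hilbert transform.

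\textbf{Sufficiency and upper norm bound.} Write $H_\varphi f(x)=\int_0^\infty (D_tf)(x)\,\varphi(t)/t\,dt$ with $D_tf(x)=f(x/t)$ satisfying $\|D_tf\|_{L^p(\R)}=t^{1/p}\|f\|_{L^p(\R)}$. Minkowski's integral inequality immediately yields
$$\|H_\varphi f\|_{L^p(\R)}\le \left(\int_0^\infty t^{1/p-1}\varphi(t)\,dt\right)\|f\|_{L^p(\R)},$$
the classical bound recalled in \cite{An}, giving the ``if'' direction together with $\|H_\varphi\|_{L^p\to L^p}\leq \int_0^\infty t^{1/p-1}\varphi(t)\,dt$.

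\textbf{Lower bound and necessity.} If (\ref{main inequality}) holds, Theorem \ref{main theorem} produces $\Hau$ bounded on $\Hd$ with norm exactly $\int_0^\infty t^{1/p-1}\varphi(t)\,dt$ and the identity $(\Hau f)^*=H_\varphi(f^*)$ for every $f\in\Hd$. Since $f\mapsto f^*$ is an isometric embedding of $\Hd$ into $L^p(\R)$ by Lemma \ref{nontangential maximal function characterization}(i), restricting $H_\varphi$ to the closed subspace of such boundary values gives an operator of norm $\|\Hau\|_{\Hd\to\Hd}$; hence $\|H_\varphi\|_{L^p\to L^p}\geq \int_0^\infty t^{1/p-1}\varphi(t)\,dt$, matching the Minkowski upper bound. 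For the converse, suppose $H_\varphi$ is bounded on $L^p(\R)$ and apply it to the $L^p$-function $(x+i\varepsilon)^{-1/p-\varepsilon}$ (the boundary value of $f_\varepsilon$ from Case 2 of the proof of Theorem \ref{main theorem}): the real-part/Fatou computation carried out there applies to $\|H_\varphi f_\varepsilon^*\|_{L^p(\R)}$ \emph{verbatim} (in fact simpler, since the $\sup_{y>0}$ in the Hardy norm is no longer needed), and upon dividing by $\|f_\varepsilon^*\|_{L^p(\R)}$ it gives
$$\int_0^{1/\varepsilon} t^{1/p-1+\varepsilon}\varphi(t)\,dt \lesssim \|H_\varphi\|_{L^p\to L^p}$$
uniformly in small $\varepsilon>0$. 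Letting $\varepsilon\to 0$ then forces (\ref{main inequality}).

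\textbf{Commutation with $H$.} A change of variables in the principal value integral defining $H$ gives $H\circ D_t=D_t\circ H$ for every $t>0$. Under (\ref{main inequality}), the representation $H_\varphi f=\int_0^\infty (D_tf)\,\varphi(t)/t\,dt$ converges absolutely in $L^p(\R)$, so defines a Bochner integral; since $H$ is a bounded (hence continuous) operator on $L^p(\R)$ for $1<p<\infty$, it commutes with this integral, yielding $H(H_\varphi f)=\int_0^\infty D_t(Hf)\,\varphi(t)/t\,dt=H_\varphi(Hf)$. The only technical point in the whole argument is this Fubini-type interchange, which is routine given the $L^p$-boundedness of $H$ and the absolute convergence $\int_0^\infty t^{1/p-1}\varphi(t)\,dt<\infty$.
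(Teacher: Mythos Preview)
Your proof is correct but differs from the paper's in three places. For the lower norm bound assuming (\ref{main inequality}), you observe that $f\mapsto f^*$ is an isometry and $(\Hau f)^*=H_\varphi(f^*)$, so $\|H_\varphi\|_{L^p\to L^p}\geq\|\Hau\|_{\Hd\to\Hd}$ directly; the paper instead repeats the $\varphi_m$ scaling trick together with Lemma~\ref{key lemma for commuting relation with the Hilbert transform}(ii). Your route is shorter and makes transparent why the holomorphic result already contains the sharp $L^p$ bound. For necessity, you recycle the complex test function $f_\varepsilon^*(x)=(x+i\varepsilon)^{-1/p-\varepsilon}$ from Case~2 of the main theorem, whereas the paper uses the nonnegative real functions $|x|^{-1/p\mp\epsilon}\chi_{\{|x|\gtrless 1\}}$; both work, but the paper's choice is cleaner because the integrand in $H_\varphi f$ is manifestly nonnegative, so no real-part estimate or question about absolute convergence of the defining integral arises. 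For commutation, you invoke the dilation-invariance $H\circ D_t=D_t\circ H$ and a Bochner-integral interchange, a self-contained real-variable argument; the paper instead pulls the identity back to $\Hd$ via Lemma~\ref{boundary value characterization 1}, exploiting that boundary values of holomorphic Hardy functions are exactly $g+iH(g)$. Your approach is more elementary and independent of the complex machinery, while the paper's keeps the commutation tied to the theme of Theorem~\ref{main theorem}.
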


In order to prove Theorem \ref{commutes with the Hilbert transform}, we need the following lemmas.

\begin{lemma}[see \cite{Du, Ga}]\label{boundary value characterization 1}
	Let $1< p<\infty$. Then:
	\begin{enumerate}[\rm (i)]
		\item If $g\in L^p(\R)$, then $f^*:= g + iH(g)$ is the boundary value function of some function $f\in \mathcal H_a^p(\mathbb C_+)$.
		\item Conversely, if $f^*$ is a boundary value function of $f\in \mathcal H_a^p(\mathbb C_+)$, then there exists a real-valued function $g\in L^p(\R)$ such that $f^*= g+iH(g)$.
	\end{enumerate}
	Moreover, in those cases,
	$$\|g\|_{L^p(\R)} \sim \|g+ iH(g)\|_{L^p(\R)}=\|f^*\|_{L^p(\R)}=\|f\|_{\mathcal H_a^p(\mathbb C_+)}.$$
\end{lemma}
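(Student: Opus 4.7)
The lemma is a classical fact about conjugate functions on the upper half-plane, so my plan is to reduce both directions to the Poisson/conjugate-Poisson representation and invoke the M.\ Riesz theorem, in combination with Lemma~\ref{nontangential maximal function characterization}(i) which already identifies $\|f^*\|_{L^p(\R)}$ with $\|f\|_{\mathcal H_a^p(\mathbb C_+)}$.

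For (i), given $g\in L^p(\R)$ I would set
\[
f(z) := \frac{1}{\pi i}\int_{-\infty}^{\infty} \frac{g(t)}{t-z}\,dt, \qquad z\in\mathbb C_+,
\]
and check that $f$ is holomorphic on $\mathbb C_+$ by differentiating under the integral sign (justified by the $L^p$-$L^{p'}$ pairing, since $1/(t-z)\in L^{p'}(dt)$ for each fixed $z\in\mathbb C_+$). Splitting this kernel into its real and imaginary parts gives
\[
f(x+iy) = (P_y*g)(x) + i(Q_y*g)(x),
\]
where $P_y$ and $Q_y$ are the Poisson and conjugate Poisson kernels of the upper half-plane. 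By the M.\ Riesz theorem, $Q_y*g\to H(g)$ in $L^p(\R)$ and a.e.\ as $y\to 0$, while $P_y*g\to g$ in the same senses; hence $f^*=g+iH(g)$ a.e. The $L^p$-boundedness of $y\mapsto P_y*g+iQ_y*g$ uniformly in $y>0$ follows from $\|P_y*g\|_{L^p}\le \|g\|_{L^p}$ together with the bound $\|Q_y*g\|_{L^p}\le C_p\|g\|_{L^p}$ (again M.\ Riesz), so $f\in\mathcal H_a^p(\mathbb C_+)$.

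For (ii), given $f\in\mathcal H_a^p(\mathbb C_+)$ with boundary trace $f^*$, I would take $g:=\mathrm{Re}\,f^*$ and $h:=\mathrm{Im}\,f^*$, both in $L^p(\R)$ by Lemma~\ref{nontangential maximal function characterization}(i). The Poisson representation $f(x+iy)=\int_{\R} P_y(x-t)f^*(t)\,dt$ (standard for $\mathcal H_a^p(\mathbb C_+)$ with $1<p<\infty$, from \cite{Du,Ga}) splits into $\mathrm{Re}\,f=P_y*g$ and $\mathrm{Im}\,f=P_y*h$. Since $f$ is holomorphic, $\mathrm{Im}\,f$ is the harmonic conjugate of $\mathrm{Re}\,f$ in $\mathbb C_+$, which forces $P_y*h=Q_y*g$ for all $y>0$. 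Letting $y\to 0$ in $L^p(\R)$ yields $h=H(g)$, i.e.\ $f^*=g+iH(g)$ with $g$ real-valued.

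Finally, the equivalence of norms follows by chaining: $\|f\|_{\mathcal H_a^p(\mathbb C_+)}=\|f^*\|_{L^p(\R)}$ by Lemma~\ref{nontangential maximal function characterization}(i); the identity $\|g+iH(g)\|_{L^p(\R)}=\|f^*\|_{L^p(\R)}$ is tautological; and $\|g\|_{L^p(\R)}\le \|g+iH(g)\|_{L^p(\R)}\le (1+C_p)\|g\|_{L^p(\R)}$ from $\|H(g)\|_{L^p(\R)}\le C_p\|g\|_{L^p(\R)}$. The main subtlety I expect is in (ii): justifying rigorously that $f$ equals the Poisson integral of its own boundary trace (which is where $1<p<\infty$ genuinely enters, to rule out singular inner factors and to have $f^*\in L^p$ determine $f$) and then deducing $P_y*h=Q_y*g$ from the holomorphy of $f$ via the Cauchy--Riemann equations applied to the harmonic functions $\mathrm{Re}\,f$ and $\mathrm{Im}\,f$. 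Once this representation is in hand, the rest is routine.
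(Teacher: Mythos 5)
Your argument is correct and is exactly the classical proof that the paper itself omits, deferring to \cite{Du, Ga}: the Cauchy-integral construction $f(z)=\frac{1}{\pi i}\int g(t)(t-z)^{-1}dt$, the splitting into Poisson and conjugate Poisson parts, and the M.~Riesz theorem constitute precisely the standard route found in those references, and your norm chain at the end is the intended one. The only step worth tightening is in (ii): holomorphy of $f$ a priori gives $P_y*h=Q_y*g+c$ for some real constant $c$ (harmonic conjugates are unique only up to a constant), and you should dispose of $c$ by letting $y\to\infty$, where both convolutions vanish because $\|P_y\|_{L^{p'}(\R)}$ and $\|Q_y\|_{L^{p'}(\R)}$ tend to $0$ for $1<p<\infty$.
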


\begin{lemma}[see \cite{An, Xi}]\label{key lemma for commuting relation with the Hilbert transform}
	Let $p\in (1,\infty)$ and let $\varphi$ be such that (\ref{main inequality}) holds. Then:
	\begin{enumerate}[\rm (i)]
		\item $H_\varphi$ is bounded on $L^p(\R)$, moreover,
		$$\|H_\varphi\|_{L^p(\R)\to L^p(\R)}\leq \int_0^\infty t^{1/p-1}\varphi(t)dt.$$
		\item If supp $\varphi\subset [0,1]$, then
		$$\|H_\varphi\|_{L^p(\R)\to L^p(\R)}= \int_0^1 t^{1/p-1}\varphi(t)dt.$$
	\end{enumerate}
\end{lemma}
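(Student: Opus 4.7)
The plan is to deduce everything from Theorem \ref{main theorem} by transferring between $\Ha_\varphi$ acting on $\mathcal H_a^p(\mathbb C_+)$ and $H_\varphi$ acting on $L^p(\R)$ via the boundary-value identity $(\Ha_\varphi f)^* = H_\varphi(f^*)$, together with the parametrization of $L^p$ boundary values as $g + iH(g)$ with $g\in L^p(\R)$ real-valued from Lemma \ref{boundary value characterization 1}. The sufficiency of (\ref{main inequality}) together with the upper bound $\|H_\varphi\|_{L^p(\R)\to L^p(\R)} \leq \int_0^\infty t^{1/p-1}\varphi(t)dt$ is already furnished by Lemma \ref{key lemma for commuting relation with the Hilbert transform}(i), so only the matching lower bound (which will simultaneously force necessity of (\ref{main inequality})) and the commutation identity $H\circ H_\varphi = H_\varphi \circ H$ require work.

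For the lower bound and the necessity, I intend to exploit that $\varphi\geq 0$ makes $H_\varphi$ a positive operator: from $|H_\varphi f|\leq H_\varphi |f|$ pointwise, the norm $\|H_\varphi\|_{L^p(\R)\to L^p(\R)}$ is attained on nonnegative $f\in L^p(\R)$ and is therefore monotone in $\varphi$. Truncating by $\varphi_{m,\delta}:= \varphi\chi_{[\delta,m]}$ with $0<\delta<m<\infty$, this monotonicity gives $\|H_{\varphi_{m,\delta}}\|_{L^p(\R)\to L^p(\R)}\leq \|H_\varphi\|_{L^p(\R)\to L^p(\R)}$. Since $\varphi_{m,\delta}$ trivially satisfies (\ref{main inequality}), Theorem \ref{main theorem} applies to $\Ha_{\varphi_{m,\delta}}$, and combining the isometry $\|f\|_{\mathcal H_a^p(\mathbb C_+)} = \|f^*\|_{L^p(\R)}$ from Lemma \ref{nontangential maximal function characterization}(i) with the boundary identity $(\Ha_{\varphi_{m,\delta}} f)^* = H_{\varphi_{m,\delta}}(f^*)$ yields
$$
\int_\delta^m t^{1/p-1}\varphi(t)dt \,=\, \|\Ha_{\varphi_{m,\delta}}\|_{\mathcal H_a^p(\mathbb C_+)\to\mathcal H_a^p(\mathbb C_+)} \,\leq\, \|H_{\varphi_{m,\delta}}\|_{L^p(\R)\to L^p(\R)} \,\leq\, \|H_\varphi\|_{L^p(\R)\to L^p(\R)}.
$$
Letting $\delta\to 0^+$ and $m\to\infty$ by monotone convergence proves both (\ref{main inequality}) and the lower bound $\int_0^\infty t^{1/p-1}\varphi(t)dt\leq \|H_\varphi\|_{L^p(\R)\to L^p(\R)}$, hence the exact norm equality after combining with Lemma \ref{key lemma for commuting relation with the Hilbert transform}(i).

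For the commutation, I assume (\ref{main inequality}) and pick a real-valued $g\in L^p(\R)$. By Lemma \ref{boundary value characterization 1}(i), $f^*:= g+ iH(g)$ is the boundary value of some $f\in\mathcal H_a^p(\mathbb C_+)$. Theorem \ref{main theorem} then gives $(\Ha_\varphi f)^* = H_\varphi(f^*) = H_\varphi(g) + iH_\varphi(H(g))$ by linearity of $H_\varphi$, while Lemma \ref{boundary value characterization 1}(ii) simultaneously writes this boundary value as $u + iH(u)$ for some real-valued $u\in L^p(\R)$. Since $\varphi\geq 0$ and both $g$ and $H(g)$ are real-valued, $H_\varphi(g)$ and $H_\varphi(H(g))$ are real-valued, so equating real and imaginary parts forces $u = H_\varphi(g)$ and $H(H_\varphi(g)) = H_\varphi(H(g))$. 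A split into real and imaginary parts extends the identity to complex-valued $g\in L^p(\R)$. The only mildly delicate step is the truncation argument, whose key ingredient is the positivity/monotonicity of $H_\varphi$; everything else is a direct application of Theorem \ref{main theorem} and the boundary/Hilbert-transform correspondence.
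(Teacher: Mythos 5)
Your proposal has a circularity at its base: part (i) of the statement you are asked to prove is exactly the upper bound $\|H_\varphi\|_{L^p(\R)\to L^p(\R)}\leq\int_0^\infty t^{1/p-1}\varphi(t)\,dt$, yet you dispose of it by declaring it ``already furnished by Lemma \ref{key lemma for commuting relation with the Hilbert transform}(i)'' --- that is, by the very lemma under proof. Nothing else in your argument supplies it, and it cannot be omitted: your norm identity in (ii) needs it as one of the two halves, and your transfer inequality $\|\Hau\|_{\Hd\to\Hd}\leq\|H_\varphi\|_{L^p(\R)\to L^p(\R)}$ presupposes that the right-hand side is finite. The fix is short but must be written out: by the substitution $x\mapsto tx$ and Minkowski's integral inequality, $\|H_\varphi f\|_{L^p(\R)}\leq\int_0^\infty \|f(\cdot/t)\|_{L^p(\R)}\,t^{-1}\varphi(t)\,dt=\int_0^\infty t^{1/p-1}\varphi(t)\,dt\,\|f\|_{L^p(\R)}$, the same computation the paper performs for $\mathcal M(\Hau f)$ in Lemma \ref{key lemma}(i). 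Note also that much of your proposal (the necessity of (\ref{main inequality}) for boundedness, the truncation $\varphi\chi_{[\delta,m]}$ with $m\to\infty$, and the commutation with $H$) is aimed at Theorem \ref{commutes with the Hilbert transform} rather than at this lemma: here (\ref{main inequality}) is a hypothesis, not a conclusion, and the commutation identity is not part of the claim.

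For part (ii) your route is legitimate once (i) is in place, and it is genuinely different from the source the paper relies on: the paper does not prove this lemma at all but cites \cite{An, Xi}, where (as in the paper's own proof of (\ref{commutes with the Hilbert transform, 2})) the lower bound is obtained by testing $H_\varphi$ directly on suitably truncated power functions $|x|^{-1/p\pm\epsilon}$. You instead deduce $\int_0^1 t^{1/p-1}\varphi(t)\,dt=\|\Hau\|_{\Hd\to\Hd}\leq\|H_\varphi\|_{L^p(\R)\to L^p(\R)}$ from Theorem \ref{main theorem} combined with $(\Hau f)^*=H_\varphi(f^*)$ and $\|f\|_{\Hd}=\|f^*\|_{L^p(\R)}$; when supp $\varphi\subset[0,1]$ no truncation is even needed. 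This is not circular, since the proof of Theorem \ref{main theorem} uses only the $L^p$ upper bound (i) (via \cite[Theorem 1]{An}) and never the sharpness statement (ii); your positivity/monotonicity observation $\|H_{\psi}\|\leq\|H_\varphi\|$ for $0\leq\psi\leq\varphi$ is also correct. But the price is that an elementary one-paragraph computation is made to depend on the entire holomorphic machinery of Section 2, which inverts the paper's logical order and should at least be flagged as such.
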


\begin{proof}[Proof of Theorem \ref{commutes with the Hilbert transform}]
	Suppose that (\ref{main inequality}) holds. By Lemma \ref{key lemma for commuting relation with the Hilbert transform}(i),
	\begin{equation}\label{commutes with the Hilbert transform, 1}
	\|H_\varphi\|_{L^p(\R)\to L^p(\R)}\leq \int_0^\infty t^{1/p-1}\varphi(t)dt.
	\end{equation}
	
	Conversely, suppose that $H_\varphi$ is bounded on $L^p(\R)$. We first claim that
	\begin{equation}\label{commutes with the Hilbert transform, 2}
	\int_0^\infty t^{1/p-1}\varphi(t)dt<\infty.
	\end{equation}
	
	Assume (\ref{commutes with the Hilbert transform, 2}) holds for a moment.
	
	For any $m>0$,  take $\varphi_m$ is as in the proof of Theorem \ref{main theorem}. Then, by a similar argument to the proof of Theorem \ref{main theorem}, we get
	\begin{eqnarray*}
		\|H_\varphi\|_{L^p(\R)\to L^p(\R)} &\geq& \|H_{\varphi_m\left(\frac{\cdot}{m}\right)}\|_{L^p(\R)\to L^p(\R)} - \|H_\varphi - H_{\varphi_m\left(\frac{\cdot}{m}\right)}\|_{L^p(\R)\to L^p(\R)}\\
		&\geq& \int_0^\infty t^{1/p-1}\varphi(t)dt - 2 \int_m^\infty t^{1/p-1}\varphi(t)dt\to \int_0^\infty t^{1/p-1}\varphi(t)dt
	\end{eqnarray*}
	as $m\to\infty$. This, together with (\ref{commutes with the Hilbert transform, 1}), yields
	$$\|H_\varphi\|_{L^p(\R)\to L^p(\R)} = \int_0^\infty t^{1/p-1}\varphi(t)dt.$$
	
	Now let us return to prove (\ref{commutes with the Hilbert transform, 2}). Indeed, for any $\epsilon\in (0,1)$, take
	$$f_\epsilon(x)= |x|^{-1/p-\epsilon}\chi_{\{y\in\R: |y|>1\}}(x)$$
	and
	$$g_\epsilon(x)= |x|^{-1/p+\epsilon}\chi_{\{y\in\R: |y|<1\}}(x)$$
	for all $x\in\R$. Then some simple computations give
	$$\|H_\varphi\|_{L^p(\R)\to L^p(\R)}\geq \frac{\|H_\varphi(f_\epsilon)\|_{L^p(\R)}}{\|f_\epsilon\|_{L^p(\R)}} \gtrsim \epsilon^\epsilon \int_1^{1/\epsilon} t^{1/p-1}\varphi(t)dt$$
	and
	$$\|H_\varphi\|_{L^p(\R)\to L^p(\R)}\geq \frac{\|H_\varphi(g_\epsilon)\|_{L^p(\R)}}{\|g_\epsilon\|_{L^p(\R)}} \gtrsim \epsilon^\epsilon \int_\epsilon^{1} t^{1/p-1}\varphi(t)dt.$$
	Letting $\epsilon\to 0$, we get
	$$\int_1^\infty t^{1/p-1}\varphi(t)dt \lesssim \|H_\varphi\|_{L^p(\R)\to L^p(\R)}<\infty$$
	and
	$$\int_0^1 t^{1/p-1}\varphi(t)dt \lesssim \|H_\varphi\|_{L^p(\R)\to L^p(\R)}<\infty.$$
	This proves (\ref{commutes with the Hilbert transform, 2}).
	
	\vskip 0.2cm
	
	Finally, we need to show that $H_\varphi$ commutes with the Hilbert transform $H$ on $L^p(\R)$. To this ends, it suffices to show
	\begin{equation}\label{commutes with the Hilbert transform, 3}
	H_\varphi(H(f))= H(H_\varphi(f))
	\end{equation}
	for all real-valued functions $f$ in $L^p(\R)$. Indeed, by Theorem \ref{main theorem} and Lemma \ref{boundary value characterization 1}, there exists a real-valued function $g$ in $L^p(\R)$ such that
	$$g + i H(g)= H_\varphi(f+ i H(f)).$$
	This proves (\ref{commutes with the Hilbert transform, 3}), and thus completes the proof of Theorem \ref{commutes with the Hilbert transform}.
\end{proof}

Let $1<p<\infty$, we denote by $H^p_+(\R)$ and $H^p_-(\R)$ the subspaces of $L^p(\R)$ consisting of those functions whose Poisson extensions to the upper half-plane $\mathbb C_+$ are holomorphic and anti-holomorphic, respectively. 

It is well-known (see \cite{Du, Ga, St}) that
\begin{equation}\label{upper Hardy spaces}
	H^p_{+}(\R)= \{f+ iH(f): f\in L^p(\R)\}
\end{equation}
and
\begin{equation}\label{lower Hardy spaces}
H^p_{-}(\R)= \{f- iH(f): f\in L^p(\R)\}.
\end{equation}
Moreover, $L^p(\R)= H^p_{+}(\R) \oplus H^p_{-}(\R)$.

\begin{theorem}\label{first corollary}
	Let $p\in (1,\infty)$ and let $\varphi$ be such that (\ref{main inequality}) holds. Then $H_\varphi$ is bounded on the space $H^p_+(\R)$, moreover,
	$$\|H_\varphi\|_{H^p_+(\R)\to H^p_+(\R)}= \int_0^\infty t^{1/p-1}\varphi(t)dt$$
	and $H_\varphi$ commutes with the Hilbert transform $H$ on $H^p_+(\R)$.
\end{theorem}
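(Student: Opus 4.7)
The plan is to bootstrap entirely from the two preceding results, Theorem \ref{main theorem} and Theorem \ref{commutes with the Hilbert transform}, together with the isometric identification of $H^p_+(\R)$ with the boundary values of $\mathcal H_a^p(\mathbb C_+)$ provided by Lemma \ref{boundary value characterization 1}. No new hard analytic ingredient should be needed; the work is purely the transfer of operator norms across this identification.

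First I would verify that $H_\varphi$ actually preserves $H^p_+(\R)$. Given $g\in H^p_+(\R)$, by Lemma \ref{boundary value characterization 1}(i) there exists $f\in\mathcal H_a^p(\mathbb C_+)$ with $f^*=g$ and $\|f\|_{\mathcal H_a^p(\mathbb C_+)}=\|g\|_{L^p(\R)}$. Theorem \ref{main theorem} yields $\mathscr H_\varphi f\in\mathcal H_a^p(\mathbb C_+)$ together with the identity $(\mathscr H_\varphi f)^*=H_\varphi(f^*)=H_\varphi(g)$. Hence $H_\varphi(g)$ is the boundary value of a function in $\mathcal H_a^p(\mathbb C_+)$, and therefore lies in $H^p_+(\R)$ by Lemma \ref{boundary value characterization 1}(ii) and the characterization (\ref{upper Hardy spaces}).

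Second, the norm identity comes out of that same correspondence. Using $\|h\|_{L^p(\R)}=\|F\|_{\mathcal H_a^p(\mathbb C_+)}$ whenever $h=F^*$, one computes
\begin{align*}
\|H_\varphi\|_{H^p_+(\R)\to H^p_+(\R)}
&=\sup_{\substack{g\in H^p_+(\R)\\ \|g\|_{L^p(\R)}\leq 1}} \|H_\varphi g\|_{L^p(\R)}\\
&=\sup_{\substack{f\in\mathcal H_a^p(\mathbb C_+)\\ \|f\|_{\mathcal H_a^p(\mathbb C_+)}\leq 1}} \|(\mathscr H_\varphi f)^*\|_{L^p(\R)}
=\|\mathscr H_\varphi\|_{\mathcal H_a^p(\mathbb C_+)\to\mathcal H_a^p(\mathbb C_+)},
\end{align*}
and Theorem \ref{main theorem} identifies the last quantity with $\int_0^\infty t^{1/p-1}\varphi(t)\,dt$.

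Finally, for the commutation with $H$, I would simply invoke Theorem \ref{commutes with the Hilbert transform}: $H_\varphi$ commutes with $H$ on all of $L^p(\R)$, so the identity $H_\varphi(H(g))=H(H_\varphi(g))$ holds in particular on the subspace $H^p_+(\R)$, where both sides remain in $L^p(\R)$ (and in fact in $H^p_+(\R)$, by the first step together with the fact that $H$ acts on $H^p_+(\R)$ as multiplication by $-i$). There is essentially no obstacle in this proof: the only thing one must be careful about is not to lose the isometry between $\mathcal H_a^p(\mathbb C_+)$ and $H^p_+(\R)$, which is what makes the supremum computation above an equality rather than merely an inequality.
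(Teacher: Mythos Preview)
Your proposal is correct and uses the same ingredients as the paper: Theorem \ref{main theorem}, Theorem \ref{commutes with the Hilbert transform}, and the identification of $H^p_+(\R)$ with boundary values of $\mathcal H_a^p(\mathbb C_+)$. The only difference is organizational: the paper obtains the upper bound $\|H_\varphi\|_{H^p_+\to H^p_+}\leq \|H_\varphi\|_{L^p\to L^p}$ by restricting from $L^p$ (using Theorem \ref{commutes with the Hilbert transform} both for invariance of $H^p_+$ and for the $L^p$ norm), and then establishes the lower bound separately by picking near-extremal $f_\varepsilon\in\mathcal H_a^p(\mathbb C_+)$ from Theorem \ref{main theorem}; you instead observe that the boundary-value map is an isometric isomorphism conjugating $H_\varphi|_{H^p_+}$ with $\mathscr H_\varphi$, which gives the norm equality in a single stroke. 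Your route is slightly cleaner, but the two arguments are essentially the same.
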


\begin{proof}
	It follows from Theorem \ref{commutes with the Hilbert transform} that $H_\varphi(f)$ belongs to $H^p_+(\R)$ for all $f\in H^p_+(\R)$, and thus
	\begin{equation}\label{a corollary, 1}
	\|H_\varphi\|_{H^p_+(\R)\to H^p_+(\R)}\leq \|H_\varphi\|_{L^p(\R)\to L^p(\R)}= \int_0^\infty t^{1/p-1}\varphi(t)dt.
	\end{equation}
	
	For any $\varepsilon >0$, by Theorem \ref{main theorem}, there exists $f_\varepsilon\in \Hd$ for which
	$$\frac{\|H_\varphi(f_\varepsilon^*)\|_{L^p(\R)}}{\|f_\varepsilon^*\|_{L^p(\R)}}= \frac{\|(\Hau f_\varepsilon)^*\|_{L^p(\R)}}{\|f_\varepsilon^*\|_{L^p(\R)}} = \frac{\|\Hau f_\varepsilon\|_{\Hd}}{\|f_\varepsilon\|_{\Hd}}\geq \int_0^\infty t^{1/p-1}\varphi(t)dt -\varepsilon.$$
	This, together with (\ref{a corollary, 1}), allows us to conclude that
	$$\|H_\varphi\|_{H^p_+(\R)\to H^p_+(\R)}=\int_0^\infty t^{1/p-1}\varphi(t)dt.$$
	
	Finally, $H_\varphi$ commutes with the Hilbert transform $H$ on $H^p_+(\R)$ is followed from Theorem \ref{commutes with the Hilbert transform} and (\ref{upper Hardy spaces}).
\end{proof}

\begin{theorem}\label{second corollary}
	Let $p\in (1,\infty)$ and let $\varphi$ be such that (\ref{main inequality}) holds. Then $H_\varphi$ is bounded on the space $H^p_-(\R)$, moreover,
	$$\|H_\varphi\|_{H^p_-(\R)\to H^p_-(\R)}= \int_0^\infty t^{1/p-1}\varphi(t)dt$$
	and $H_\varphi$ commutes with the Hilbert transform $H$ on $H^p_-(\R)$.
\end{theorem}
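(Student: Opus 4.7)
The plan is to reduce the statement to Theorem \ref{first corollary} by exploiting the symmetry between $H^p_+(\R)$ and $H^p_-(\R)$ under complex conjugation. Specifically, by \eqref{upper Hardy spaces} and \eqref{lower Hardy spaces}, the map $J:g\mapsto \overline{g}$ is a conjugate-linear isometric bijection from $H^p_+(\R)$ onto $H^p_-(\R)$. Moreover, since $\varphi$ is real-valued, a direct computation shows $\overline{H_\varphi(g)}=H_\varphi(\overline{g})$ for every $g\in L^p(\R)$, that is, $J\circ H_\varphi=H_\varphi\circ J$ on $L^p(\R)$.

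First I would check that $H_\varphi$ sends $H^p_-(\R)$ into itself. By Theorem \ref{commutes with the Hilbert transform}, $H_\varphi$ is bounded on $L^p(\R)$ and commutes with $H$. Hence for any $h=f-iH(f)\in H^p_-(\R)$ with real $f\in L^p(\R)$, one has
$$H_\varphi(h)=H_\varphi(f)-iH_\varphi(H(f))=H_\varphi(f)-iH(H_\varphi(f))\in H^p_-(\R),$$
from which the norm upper bound follows at once:
$$\|H_\varphi\|_{H^p_-(\R)\to H^p_-(\R)}\leq \|H_\varphi\|_{L^p(\R)\to L^p(\R)}=\int_0^\infty t^{1/p-1}\varphi(t)\,dt.$$

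For the matching lower bound, the intertwining identity $J\circ H_\varphi=H_\varphi\circ J$ together with $\|Jg\|_{L^p(\R)}=\|g\|_{L^p(\R)}$ gives
$$\|H_\varphi\|_{H^p_-(\R)\to H^p_-(\R)} = \|H_\varphi\|_{H^p_+(\R)\to H^p_+(\R)},$$
so Theorem \ref{first corollary} yields the desired equality. Finally, commutation $H_\varphi\circ H=H\circ H_\varphi$ on $H^p_-(\R)$ is immediate from the same commutation on $L^p(\R)$ (Theorem \ref{commutes with the Hilbert transform}) together with the inclusion $H^p_-(\R)\subset L^p(\R)$. I do not expect any genuine obstacle here beyond verifying that $J$ really intertwines $H_\varphi$, which relies crucially on the real-valuedness of $\varphi$; every harder analytical step has already been absorbed into Theorems \ref{main theorem}, \ref{commutes with the Hilbert transform} and \ref{first corollary}.
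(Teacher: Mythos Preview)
Your proposal is correct and follows essentially the same approach as the paper: the paper's proof is the one-line remark that the result follows from Theorem~\ref{first corollary} together with the fact that $f\in H^p_+(\R)$ if and only if $\bar f\in H^p_-(\R)$. Your argument simply unpacks this, making explicit the intertwining $J\circ H_\varphi=H_\varphi\circ J$ (which uses that $\varphi$ is real-valued) and the resulting equality of operator norms on $H^p_+(\R)$ and $H^p_-(\R)$; the separate check that $H_\varphi$ preserves $H^p_-(\R)$ via Theorem~\ref{commutes with the Hilbert transform} is a mild redundancy but perfectly fine.
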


\begin{proof}
	It follows from Theorem \ref{first corollary} and the fact that $f\in H^p_+(\R)$ if and only if $\bar f\in H^p_-(\R)$.
\end{proof}

Let $\Phi$ be in the Schwartz space $\mathcal S(\R)$ satisfying $\int_{\R}\Phi(x)dx\ne 0$. For any $t>0$, set $\Phi_t(x):= t^{-1}\Phi(x/t)$. Following Fefferman and Stein \cite{FS, St}, we define the {\it real Hardy space} $\H^1(\R)$ as the set of all functions $f\in L^1(\R)$ such that 
$$\|f\|_{\H^1(\R)}:= \left\|M_{\Phi}(f)\right\|_{L^1(\R)}<\infty,$$
where $M_{\Phi}(f)$ is the {\it smooth maximal function} of $f$ defined by
$$M_{\Phi}(f)(x)= \sup_{t>0}|f*\Phi_t(x)|,\quad x\in\R.$$

Remark that the norm $\|\cdot\|_{\H^1(\R)}$ depends on the choice of $\Phi$, but the space $\H^1(\R)$ does not depend on this choice (see Proposition \ref{some equivalent characterizations of H1} below). 

The following lemma is well-known.

\begin{lemma}[see \cite{Du, Ga, QXYYY}]\label{boundary value characterization}
	\begin{enumerate}[\rm (i)]
		\item If $g\in \H^1(\R)$, then $f^*:= g + iH(g)$ is the boundary value function of some function $f\in \mathcal H_a^1(\mathbb C_+)$.
		\item Conversely, if $f^*$ is a boundary value function of $f\in \mathcal H_a^1(\mathbb C_+)$, then there exists a real-valued function $g\in \H^1(\R)$ such that $f^*= g+iH(g)$.
	\end{enumerate}
	Moreover, in those cases,
	$$\|g\|_{\H^1(\R)} \sim \|g+ iH(g)\|_{\H^1(\R)}=\|f^*\|_{\H^1(\R)}\sim \|f^*\|_{L^1(\R)}=\|f\|_{\mathcal H_a^1(\mathbb C_+)}.$$
\end{lemma}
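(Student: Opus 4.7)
The plan is to reduce both implications to the classical characterization of the real Hardy space: $g\in \mathcal H^1(\mathbb R)$ if and only if $g\in L^1(\mathbb R)$ and $H(g)\in L^1(\mathbb R)$, with $\|g\|_{\mathcal H^1(\mathbb R)}\sim \|g\|_{L^1(\mathbb R)}+\|H(g)\|_{L^1(\mathbb R)}$. This equivalence is standard Fefferman-Stein theory, and once granted, each direction of the lemma comes from working with the Poisson $P_y$ and conjugate Poisson $Q_y$ kernels.

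For part (i), starting from $g\in \mathcal H^1(\mathbb R)$, I would define the holomorphic extension
$$f(x+iy)= (P_y*g)(x)+i(Q_y*g)(x), \qquad x+iy\in\mathbb C_+,$$
and note that $P_y+iQ_y=\frac{i}{\pi}\cdot\frac{1}{\cdot\, -(x+iy)}$ is holomorphic in $z=x+iy$, so $f$ is holomorphic on $\mathbb C_+$. Using the Young-type estimates $\|P_y*g\|_{L^1}\le\|g\|_{L^1}$ and $\|Q_y*g\|_{L^1}\le\|H(g)\|_{L^1}$ (the latter because $Q_y*g=P_y*H(g)$), one gets uniformly in $y>0$
$$\int_{-\infty}^{\infty}|f(x+iy)|\,dx\le \|g\|_{L^1}+\|H(g)\|_{L^1}\lesssim \|g\|_{\mathcal H^1(\mathbb R)},$$
so $f\in \mathcal H_a^1(\mathbb C_+)$. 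The almost-everywhere identity $f^*=g+iH(g)$ follows from the $L^1$ convergence $P_y*g\to g$ and $Q_y*g\to H(g)$ as $y\to 0$.

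For part (ii), starting from $f\in \mathcal H_a^1(\mathbb C_+)$, Lemma \ref{nontangential maximal function characterization}(i) gives $f^*\in L^1(\mathbb R)$ with $\|f^*\|_{L^1(\mathbb R)}=\|f\|_{\mathcal H_a^1(\mathbb C_+)}$. Writing $f=u+iv$ with $u,v$ real harmonic, the Cauchy-Riemann equations force $v$ to be the harmonic conjugate of $u$, so that (invoking the classical identification of the imaginary part with the Hilbert transform of the real boundary data, as in Duren or Garnett) $\operatorname{Im}f^*=H(\operatorname{Re}f^*)$. Thus $g:=\operatorname{Re}f^*$ and $H(g)=\operatorname{Im}f^*$ both lie in $L^1(\mathbb R)$, and the Hilbert-transform characterization of $\mathcal H^1(\mathbb R)$ yields $g\in \mathcal H^1(\mathbb R)$ with $f^*=g+iH(g)$.

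The norm chain $\|g\|_{\mathcal H^1(\mathbb R)}\sim\|g+iH(g)\|_{\mathcal H^1(\mathbb R)}=\|f^*\|_{\mathcal H^1(\mathbb R)}\sim\|f^*\|_{L^1(\mathbb R)}=\|f\|_{\mathcal H_a^1(\mathbb C_+)}$ then follows by putting together the $\mathcal H^1$ characterization (applied separately to the real and imaginary parts) and the first assertion of Lemma \ref{nontangential maximal function characterization}(i). The main obstacle is really the identification $\operatorname{Im}f^*=H(\operatorname{Re}f^*)$ at the endpoint $p=1$: because $H$ is unbounded on $L^1$, one cannot mimic the easy $L^p$ ($1<p<\infty$) argument of Lemma \ref{boundary value characterization 1} directly, and one must instead rely on the specific $\mathcal H^1$-$L^1$ boundary theory for holomorphic Hardy spaces on the half-plane, which is where the citation to \cite{Du,Ga,QXYYY} is essential.
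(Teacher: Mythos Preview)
The paper does not give its own proof of this lemma: it is stated with the attribution ``see \cite{Du, Ga, QXYYY}'' and used as a black box, so there is no in-paper argument to compare against. Your sketch is a faithful outline of the standard proof one finds in those references---Poisson/conjugate-Poisson extension for (i), Cauchy--Riemann plus the Hilbert-transform characterization of $\mathcal H^1(\mathbb R)$ for (ii)---and you correctly flag that the delicate point is the identification $\operatorname{Im}f^*=H(\operatorname{Re}f^*)$ at $p=1$, which is precisely where one must invoke the cited boundary theory rather than a soft $L^p$ argument. One small cosmetic slip: your formula for $P_y+iQ_y$ has a sign error (it should be $\frac{i}{\pi}\frac{1}{(x+iy)-\cdot}$, i.e.\ the Cauchy kernel), but this does not affect the holomorphy claim or the rest of the argument.
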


Let $P_t(x)=\frac{1}{\pi}\frac{t}{x^2+t^2}$ be the Poisson kernel on $\R$. For any $f\in L^1(\R)$, we denote $u(y,t)= f*P_t(y)$. Then, set
$$\mathcal M_P(f)(x)= \sup_{|y-x|<t} |u(y,t)|\quad\mbox{and}\quad S(f)(x)=\left[\iint_{|y-x|<t}\left(|u_t(y,t)|^2 +|u_y(y,t)|^2\right) dy dt\right]^{1/2}.$$

A function $a$ is called an $\H^1$-atom related to the interval $B$ if
\begin{enumerate}[$\bullet$]
	\item supp $a\subset B$;
	\item $\|a\|_{L^\infty(\R)}\leq |B|^{-1}$;
	\item $\int_{\R} a(x)dx=0$.
\end{enumerate} 
We define the Hardy space $\H^1_{\rm at}(\R)$ as the space of functions $f\in L^1(\R)$ which can be written as $f=\sum_{j=1}^\infty \lambda_j a_j$ with $a_j$'s are $\H^1$-atoms and $\lambda_j$'s are complex numbers satisfying $\sum_{j=1}^\infty |\lambda_j|<\infty$. The norm on $\H^1_{\rm at}(\R)$ is then defined by
$$\|f\|_{\H^1_{\rm at}(\R)}:= \inf\left\{\sum_{j=1}^\infty |\lambda_j|: f= \sum_{j=1}^\infty \lambda_j a_j\right\}.$$ 

The following proposition is classical and can be found in Stein's book \cite{St}.

\begin{proposition}\label{some equivalent characterizations of H1}
	Let $f\in L^1(\R)$. Then the following conditions are equivalent:
	\begin{enumerate}[\rm (i)]
		\item $f\in \H^1(\R)$.
		\item $\mathcal M_\Phi(f)(\cdot)= \sup_{|y-\cdot|<t}|f*\Phi_t(y)|\in L^1(\R)$.
		\item $\mathcal M_P(f)\in L^1(\R)$.
		\item $S(f)\in L^1(\R)$.
		\item $f\in \H^1_{\rm at}(\R)$.
		\item $H(f)\in L^1(\R)$.
	\end{enumerate}
	Moreover, in those cases,
	\begin{eqnarray*}
		\|f\|_{\H^1(\R)} &\sim& \|\mathcal M_\Phi(f)\|_{L^1(\R)}\sim \|\mathcal M_P(f)\|_{L^1(\R)}\sim \|S(f)\|_{L^1(\R)}\\
		&\sim & 	\|f\|_{\H^1_{\rm at}(\R)}\sim \|f\|_{L^1(\R)} + \|H(f)\|_{L^1(\R)}.
	\end{eqnarray*}
	Of course, the above constants are depending on $\Phi$.
\end{proposition}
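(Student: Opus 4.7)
The plan is to prove the cycle of implications (i) $\Rightarrow$ (ii) $\Rightarrow$ (iii) $\Rightarrow$ (iv) $\Rightarrow$ (v) $\Rightarrow$ (vi) $\Rightarrow$ (i) together with the matching norm comparisons. The unifying device I would introduce is the Fefferman-Stein grand maximal function $\mathcal{M}_N(f)(x) = \sup\{|f*\psi_t(y)| : |y-x|<t,\ t>0,\ \psi \in \mathcal{F}_N\}$ over a bounded family $\mathcal{F}_N \subset \mathcal{S}(\R)$: the first goal is to show that conditions (i), (ii), (iii) are each equivalent to $\mathcal{M}_N(f) \in L^1(\R)$ for $N$ sufficiently large, thereby collapsing three separate maximal-function characterizations into one.

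For (i) $\Rightarrow$ (ii) one compares the radial and nontangential smooth maximal functions via rescaling of $\Phi$. The substantive step is $M_\Phi(f) \in L^1 \Rightarrow \mathcal{M}_N(f) \in L^1$, which uses the representation of a general Schwartz $\psi$ as a superposition of dilates and translates of $\Phi$ (valid because $\int \Phi \neq 0$) to dominate $\mathcal{M}_N(f)$ by a tangential Schwartz maximal function controlled in $L^1$ by $M_\Phi(f)$. The Poisson kernel is accommodated through the subordination formula, which represents $P_t$ as a Gaussian superposition and yields comparability with a Schwartz maximal function in both directions. The equivalence (iii) $\Leftrightarrow$ (iv) is the area-integral characterization, which I would deduce from the Fefferman-Stein good-$\lambda$ inequalities between $S(f)$ and $\mathcal{M}_P(f)$, exploiting the subharmonicity of $|u|^q$ on $\mathbb{C}_+$ for a suitable $q \in (0,1)$.

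The step (iii) $\Rightarrow$ (v) is a Calder\'on-Zygmund-type decomposition on the level sets $O_k = \{\mathcal{M}_P(f) > 2^k\}$: Whitney-decompose each $O_k$, form a smooth partition of unity subordinate to the Whitney intervals, and assemble atoms as appropriately normalized differences of local projections between consecutive levels, with the mean-zero condition forced automatically by the construction. The return implications (v) $\Rightarrow$ (i) and (v) $\Rightarrow$ (vi) both reduce, via sublinearity, to the uniform atom estimates $\|M_\Phi(a)\|_{L^1(\R)}, \|H(a)\|_{L^1(\R)} \lesssim 1$ for any $\H^1$-atom $a$ supported in an interval $B$: on $2B$ one invokes $L^2$-boundedness of the operator together with $\|a\|_{L^2}^2 \leq |B|^{-1}$, while on $(2B)^c$ one exploits the cancellation $\int a = 0$ to extract decay of order $|B| (x - x_B)^{-2}$. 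Finally, (vi) $\Rightarrow$ (i) is obtained by forming the Poisson extensions $u = P_t * f$ and $v = P_t * H(f)$, observing that $F = u + iv$ is holomorphic on $\mathbb{C}_+$ with uniformly bounded $L^1$-norms on horizontal lines (so $F \in \mathcal{H}_a^1(\mathbb{C}_+)$), and invoking Lemma \ref{boundary value characterization} to pass back to $f \in \H^1(\R)$.

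The main obstacle is not any individual implication but the administrative burden of aligning the numerous maximal-function comparisons and carrying through the subordination argument needed to accommodate the non-Schwartz Poisson kernel at the level of grand maximal functions. Since the statement is entirely classical and no step uses an idea specific to the present paper, I would reference Stein's treatment for the generic ingredients and present in detail only the atom estimates and the transfer to $\mathcal{H}_a^1(\mathbb{C}_+)$ via Lemma \ref{boundary value characterization}, which is the piece directly used elsewhere in this work.
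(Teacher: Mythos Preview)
The paper does not prove this proposition at all: it simply states that the result is classical and refers the reader to Stein's book \cite{St}. Your outline is a correct and faithful sketch of the standard Fefferman--Stein argument found there (grand maximal function equivalences, subordination for the Poisson kernel, good-$\lambda$ inequalities for the area integral, Calder\'on--Zygmund/Whitney decomposition for the atomic characterization, uniform atom estimates for the return implications), so in effect you are supplying the details of precisely the reference the paper cites rather than offering an alternative proof.

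One minor remark: for the step (vi) $\Rightarrow$ (i) you route through $\mathcal H_a^1(\mathbb C_+)$ and invoke Lemma \ref{boundary value characterization}. This is logically sound in the context of the paper, since that lemma is independently cited from \cite{Du, Ga, QXYYY} and does not rely on the present proposition; however, it is a slight departure from Stein's self-contained real-variable treatment, and you should be sure the norm comparison in Lemma \ref{boundary value characterization} does not implicitly presuppose one of the equivalences you are proving.
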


The following gives a lower bound for the norm of $H_\varphi$ on $\H^1(\R)$.

\begin{theorem}\label{a lower bound}
	Let $\|\cdot\|_*$ be one of the six norms in Proposition \ref{some equivalent characterizations of H1}. Assume that $H_\varphi$ is bounded on $(\H^1(\R),\|\cdot\|_*)$. Then,
	$$\int_0^\infty \varphi(t)dt \leq \|H_\varphi\|_{(\H^1(\R),\|\cdot\|_*)\to (\H^1(\R),\|\cdot\|_*)}<\infty$$
	and $H_\varphi$ commutes with the Hilbert transform $H$ on $\H^1(\R)$.
\end{theorem}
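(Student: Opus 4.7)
The plan has two stages: first, establish that $H_\varphi$ commutes with the Hilbert transform $H$ on $\H^1(\R)$; second, use this together with Theorem \ref{main theorem} to derive the lower bound on $\|H_\varphi\|_*$.

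For the commutation, I would imitate the proof of Theorem \ref{commutes with the Hilbert transform}, but with Lemma \ref{boundary value characterization} replacing Lemma \ref{boundary value characterization 1}. Given a real-valued $f \in \H^1(\R)$, Lemma \ref{boundary value characterization}(i) produces $F \in \H^1_a(\mathbb C_+)$ with $F^* = f + iH(f)$. Theorem \ref{main theorem} at $p=1$ then yields $\Hau F \in \H^1_a(\mathbb C_+)$ with $(\Hau F)^* = H_\varphi(F^*) = H_\varphi(f) + iH_\varphi(H(f))$, using linearity of $H_\varphi$ on $L^1(\R)$. Applying Lemma \ref{boundary value characterization}(ii) to $(\Hau F)^*$ gives a real $g \in \H^1(\R)$ with $(\Hau F)^* = g + iH(g)$; matching real and imaginary parts forces $g = H_\varphi(f)$ and $H(g) = H_\varphi(H(f))$. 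By $\mathbb C$-linearity, $H_\varphi$ then commutes with $H$ on all of $\H^1(\R)$.

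For the lower bound, the idea is to restrict $H_\varphi$ to the subspace of $\H^1(\R)$ formed by boundary values of $\H^1_a(\mathbb C_+)$ functions, identify this restriction with $\Hau$, and invoke Theorem \ref{main theorem}. The commutation and Theorem \ref{main theorem} give, for every $F\in \H^1_a(\mathbb C_+)$,
$$\|F\|_{\H^1_a(\mathbb C_+)}=\|F^*\|_{L^1(\R)},\qquad \|\Hau F\|_{\H^1_a(\mathbb C_+)}=\|H_\varphi(F^*)\|_{L^1(\R)}.$$
I would choose $\|\cdot\|_*$ to be the norm $\|h\|_{L^1}+\|H(h)\|_{L^1}$ of item (vi). Using $H(u+iH(u))=-i(u+iH(u))$, so that $|H(F^*)|=|F^*|$ pointwise, I see that $\|F^*\|_*=2\|F^*\|_{L^1}$; by the commutation, $H_\varphi(F^*)$ again has this boundary-value structure, giving $\|H_\varphi(F^*)\|_*=2\|H_\varphi(F^*)\|_{L^1}$. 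The factor of $2$ cancels in the ratio, so
$$\|H_\varphi\|_*\geq \sup_{F\in\H^1_a(\mathbb C_+)\setminus\{0\}}\frac{\|H_\varphi(F^*)\|_*}{\|F^*\|_*}=\sup_F \frac{\|\Hau F\|_{\H^1_a(\mathbb C_+)}}{\|F\|_{\H^1_a(\mathbb C_+)}}=\int_0^\infty \varphi(t)dt,$$
the last equality by Theorem \ref{main theorem}. Finiteness of $\int_0^\infty \varphi(t)dt$ then follows from $\|H_\varphi\|_*<\infty$. For the other five equivalent norms, the same conclusion is reached by invoking Lemma \ref{boundary value characterization} together with Proposition \ref{some equivalent characterizations of H1}, which guarantee that all six norms are mutually proportional on the boundary-value subspace.

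The main obstacle is isolating the norm that produces the constant exactly $1$ in the lower bound; this is the role of norm (vi) together with the identity $H(F^*)=-iF^*$ for boundary values of $\H^1_a(\mathbb C_+)$, which makes the restricted operator norm coincide with $\|\Hau\|_{\H^1_a(\mathbb C_+)\to \H^1_a(\mathbb C_+)}$ rather than merely comparable to it.
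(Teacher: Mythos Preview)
Your commutation argument matches the paper's. The lower-bound argument, however, has a genuine gap for five of the six norms, plus a circularity issue.

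\textbf{Circularity.} Both your commutation step and your lower-bound step invoke Theorem~\ref{main theorem} at $p=1$; that theorem requires $\int_0^\infty\varphi(t)\,dt<\infty$ as a hypothesis. You only derive this finiteness at the end, from the inequality you are trying to prove. The paper breaks the circle by first citing \cite[Theorem~3.3]{HKQ} to obtain $\int_0^\infty\varphi(t)\,dt<\infty$ directly from the boundedness of $H_\varphi$ on $\H^1(\R)$; you should do the same.

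\textbf{The main gap.} For norm~(vi) your argument is correct and elegant: on boundary values $F^*$ one has $H(F^*)=-iF^*$, so $\|F^*\|_*=2\|F^*\|_{L^1}=2\|F\|_{\H^1_a(\mathbb C_+)}$, and the restriction of $H_\varphi$ to that subspace is isometrically conjugate to $\Hau$. But the extension to the other five norms does not go through. Your claim that the six norms are ``mutually proportional on the boundary-value subspace'' is false: Lemma~\ref{boundary value characterization} and Proposition~\ref{some equivalent characterizations of H1} give only equivalence with constants, e.g.\ $\|\mathcal M_P(F^*)\|_{L^1}\sim\|F^*\|_{L^1}$, not equality up to a fixed scalar. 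Equivalence alone only gives $\|H_\varphi\|_*\geq c\int_0^\infty\varphi(t)\,dt$ for some $c<1$, which is not the stated conclusion.

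\textbf{How the paper handles this.} The paper never relies on an isometry between $\|\cdot\|_*$ and $\|\cdot\|_{\H^1_a(\mathbb C_+)}$. Instead it uses approximate eigenvectors: for the test functions $f_\varepsilon(z)=(z+i)^{-1-\varepsilon}$ one shows (Lemma~\ref{a key lemma for real Hausdorff operators}) that
\[
\frac{\bigl\|H_{\varphi_\delta}(f_\varepsilon^*)-f_\varepsilon^*\int_0^\infty\varphi_\delta(t)\,dt\bigr\|_*}{\|f_\varepsilon^*\|_*}
\;\lesssim\;
\frac{\bigl\|\mathscr H_{\varphi_\delta}(f_\varepsilon)-f_\varepsilon\int_0^\infty\varphi_\delta(t)\,dt\bigr\|_{\H^1_a(\mathbb C_+)}}{\|f_\varepsilon\|_{\H^1_a(\mathbb C_+)}}\;\longrightarrow\;0
\]
as $\varepsilon\to0$. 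Because the numerator tends to zero relative to the denominator, the equivalence constants between $\|\cdot\|_*$ and $\|\cdot\|_{\H^1_a(\mathbb C_+)}$ become irrelevant, and one obtains $\|H_{\varphi_\delta}\|_*\geq\int_\delta^1\varphi(t)\,dt$ for \emph{any} of the six norms. A further truncation/scaling (the $\varphi_m$ device) removes the restriction $\operatorname{supp}\varphi\subset[0,1]$. Your argument for norm~(vi) is a nice shortcut in that one case, but to cover all six norms you need this approximate-eigenvector mechanism or something equivalent.
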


It should be pointed out that, when supp $\varphi\subset [1,\infty)$ and $\|\cdot\|_{*}=\|\cdot\|_{\H^1_{\rm at}(\R)}$, the above theorem is due to Xiao \cite[p. 666]{Xi} (see also \cite{LL, Li08}).

In order to prove Theorem \ref{a lower bound}, we need the following lemma.

\begin{lemma}\label{a key lemma for real Hausdorff operators}
	Let $\varphi$ be such that $\int_0^\infty \varphi(t)dt<\infty$ and supp $\varphi\subset [0,1]$. Then,
	$$\int_0^1 \varphi(t)dt\leq\|H_\varphi\|_{(\H^1(\R),\|\cdot\|_*)\to (\H^1(\R),\|\cdot\|_*)}<\infty.$$ 
\end{lemma}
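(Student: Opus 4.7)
The statement splits into a finiteness claim and a lower bound with constant one; I would prove them separately, both by reducing to the already established sharp identity $\|\Hau\|_{\H^1_a(\mathbb C_+)\to\H^1_a(\mathbb C_+)}=\int_0^1\varphi(t)\,dt$ of Theorem \ref{main theorem} (with $p=1$ and $\mathrm{supp}\,\varphi\subset[0,1]$).

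For the upper bound (in fact the sharp estimate $\|H_\varphi\|_*\le\int_0^1\varphi$, which entails finiteness) I would proceed directly. Writing $D_t f(x):=f(x/t)$, each of the six norms $\|\cdot\|_*$ listed in Proposition \ref{some equivalent characterizations of H1} is \emph{exactly} dilation-equivariant: $\|D_t f\|_*=t\|f\|_*$. This is clear for $M_\Phi,\mathcal M_\Phi,\mathcal M_P,S$ because each satisfies $T(D_t f)(x)=T(f)(x/t)$; for $\|\cdot\|_{\H^1_{\rm at}}$ it follows from the fact that $t^{-1}D_t a$ is an $\H^1$-atom for $tI$ whenever $a$ is an $\H^1$-atom for $I$; and for $\|\cdot\|_{L^1}+\|H(\cdot)\|_{L^1}$ it follows from the commutation of $H$ with dilations. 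Writing $H_\varphi f=\int_0^1(D_t f)\varphi(t)/t\,dt$ and applying Minkowski's integral inequality in the Banach space $(\H^1(\R),\|\cdot\|_*)$ then gives
\[
\|H_\varphi f\|_*\le\int_0^1\|D_t f\|_*\,\frac{\varphi(t)}{t}\,dt=\int_0^1\varphi(t)\,dt\cdot\|f\|_*,
\]
so $\|H_\varphi\|_*\le\int_0^1\varphi(t)\,dt<\infty$.

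For the lower bound I would transfer near-extremizers from $\H^1_a(\mathbb C_+)$. For each $\varepsilon>0$, Theorem \ref{main theorem} supplies $f_\varepsilon\in\H^1_a(\mathbb C_+)$ with $\|\Ha_\varphi f_\varepsilon\|_{\H^1_a}\ge(\int_0^1\varphi-\varepsilon)\|f_\varepsilon\|_{\H^1_a}$; the explicit $f_\varepsilon(z)=(z+i)^{-1-\varepsilon}$ used in the proof of Lemma \ref{key lemma}(ii) is a convenient choice. By Lemma \ref{boundary value characterization}(ii), write $f_\varepsilon^*=g_\varepsilon+iH(g_\varepsilon)$ for some real-valued $g_\varepsilon\in\H^1(\R)$. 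Theorem \ref{main theorem} gives $(\Ha_\varphi f_\varepsilon)^*=H_\varphi(g_\varepsilon)+iH_\varphi(H(g_\varepsilon))$; since $\varphi\ge 0$ and $g_\varepsilon$ is real, both summands are real-valued, so applying Lemma \ref{boundary value characterization}(ii) to $\Ha_\varphi f_\varepsilon$ identifies $H_\varphi(g_\varepsilon)$ as an element of $\H^1(\R)$ and forces the commutation $H(H_\varphi(g_\varepsilon))=H_\varphi(H(g_\varepsilon))$. Combining the identifications $\|g_\varepsilon\|_{\H^1(\R)}\sim\|f_\varepsilon\|_{\H^1_a}$ and $\|H_\varphi g_\varepsilon\|_{\H^1(\R)}\sim\|\Ha_\varphi f_\varepsilon\|_{\H^1_a}$ from Lemma \ref{boundary value characterization} and letting $\varepsilon\to 0$ yields $\|H_\varphi\|_*\gtrsim\int_0^1\varphi$.

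The main obstacle is recovering the \emph{exact} constant $1$ in front of $\int_0^1\varphi$, since Lemma \ref{boundary value characterization} supplies only equivalent (not equal) norms. To eliminate the equivalence constants I would first work on the real subspace of $\H^1(\R)$ equipped with the auxiliary ``conjugate-pair'' norm $\|h\|^\sharp:=\|h+iH(h)\|_{L^1(\R)}$, for which the identifications $\|g_\varepsilon\|^\sharp=\|f_\varepsilon\|_{\H^1_a}$ and $\|H_\varphi g_\varepsilon\|^\sharp=\|\Ha_\varphi f_\varepsilon\|_{\H^1_a}$ are equalities and the argument above yields $\|H_\varphi\|^\sharp=\int_0^1\varphi$ exactly. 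I would then combine this sharp identity with the upper bound $\|H_\varphi\|_*\le\int_0^1\varphi$ already obtained, and use the dilation-equivariance $H_\varphi\circ D_\lambda=D_\lambda\circ H_\varphi$ together with $\|D_\lambda\cdot\|_*=\lambda\|\cdot\|_*$, to transport the sharp constant $1$ to each of the six norms via a concentration/scaling limit on the extremizers $g_\varepsilon$, concluding $\int_0^1\varphi\le\|H_\varphi\|_*$.
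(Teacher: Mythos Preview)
Your upper bound is fine and in fact sharper than what the paper records: the paper simply cites the literature for $\|H_\varphi\|_*\lesssim\int_0^1\varphi$, whereas your Minkowski-plus-exact-dilation-equivariance argument gives the clean inequality $\|H_\varphi\|_*\le\int_0^1\varphi$.

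The lower bound, however, has a real gap. Your final ``concentration/scaling'' step cannot work as stated: since $H_\varphi$ commutes with every $D_\lambda$ and each of the norms $\|\cdot\|_*$ and $\|\cdot\|^\sharp$ satisfies the \emph{same} homogeneity $\|D_\lambda f\|=\lambda\|f\|$, dilating a near-extremizer $g_\varepsilon$ leaves both ratios $\|H_\varphi g_\varepsilon\|_*/\|g_\varepsilon\|_*$ and $\|g_\varepsilon\|_*/\|g_\varepsilon\|^\sharp$ unchanged. No scaling limit can therefore remove the equivalence constants picked up when you pass from $\|\cdot\|^\sharp$ to $\|\cdot\|_*$; your argument as written only yields $\|H_\varphi\|_*\gtrsim\int_0^1\varphi$.

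The idea you are missing --- and the one the paper uses --- is that the explicit test functions $f_\varepsilon(z)=(z+i)^{-1-\varepsilon}$ are not merely near-extremizers but \emph{approximate eigenvectors}: estimate (\ref{an estimate for the norm}) from Lemma~\ref{key lemma}(ii) says
\[
\frac{\bigl\|\mathscr H_{\varphi_\delta}f_\varepsilon-\bigl(\textstyle\int_\delta^1\varphi\bigr)f_\varepsilon\bigr\|_{\H^1_a(\mathbb C_+)}}{\|f_\varepsilon\|_{\H^1_a(\mathbb C_+)}}\longrightarrow 0\qquad(\varepsilon\to 0).
\]
Since both numerator and denominator are $\H^1_a$-norms of boundary values of $\H^1_a$-functions, Lemma~\ref{boundary value characterization} and Proposition~\ref{some equivalent characterizations of H1} let you replace $\H^1_a$ by $\|\cdot\|_*$ at the cost of a single multiplicative constant on a quantity tending to zero; hence the same ratio with $\|\cdot\|_*$ also tends to zero. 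The triangle inequality then gives $\|H_{\varphi_\delta}f_\varepsilon^*\|_*/\|f_\varepsilon^*\|_*\to\int_\delta^1\varphi$ with \emph{no} loss of constant, because the equivalence constants cancel when you compare norms of nearly parallel vectors. Letting $\delta\to 0$ (using your own upper bound to control $\|H_\varphi-H_{\varphi_\delta}\|_*$) finishes the job. Replace your scaling argument by this approximate-eigenvector observation and the proof goes through.
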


\begin{proof}
	It is well-known  (see \cite{An, HKQ, LM}) that if $\int_0^\infty \varphi(t)dt<\infty$, then $H_\varphi$ is bounded on $\H^1(\R)$, moreover,
	\begin{equation}\label{an upper bound for the norm}
		\|H_\varphi\|_{(\H^1(\R),\|\cdot\|_*)\to (\H^1(\R),\|\cdot\|_*)} \lesssim \int_0^\infty \varphi(t)dt= \int_0^1 \varphi(t)dt.
	\end{equation}

	We now show that
	$$\int_0^1 \varphi(t)dt\leq\|H_\varphi\|_{(\H^1(\R),\|\cdot\|_*)\to (\H^1(\R),\|\cdot\|_*)}.$$
	Indeed, let $\delta\in (0,1)$ and $\varphi_\delta$ be as in the proof of Lemma \ref{key lemma}(ii). For any $\varepsilon>0$, define the function $f_\varepsilon: \mathbb C_+\to\mathbb C$ by
	$$f_\varepsilon(z)=\frac{1}{(z+i)^{1+\varepsilon}}.$$
	Then, by Lemma \ref{key lemma}(iii), Lemma \ref{boundary value characterization}, Proposition \ref{some equivalent characterizations of H1} and (\ref{an estimate for the norm}), 
	\begin{eqnarray*}
	&& \frac{\left\|H_{\varphi_\delta}(f_\varepsilon^*) - f_\varepsilon^* \int_0^\infty \varphi_\delta(t)dt \right\|_{*}}{\|f_\varepsilon^*\|_{*}}\\
	&\lesssim& \frac{\left\|\mathscr H_{\varphi_\delta}(f_\varepsilon) - f_\varepsilon \int_0^\infty \varphi_\delta(t)dt \right\|_{\mathcal H^1_a(\mathbb C_+)}}{\|f_\varepsilon\|_{\mathcal H^1_a(\mathbb C_+)}}\to 0
	\end{eqnarray*}
	as $\varepsilon\to 0$. This implies that
	$$\int_{\delta}^{1} \varphi(t)dt= \int_{0}^{\infty} \varphi_\delta(t)dt\leq \|H_{\varphi_\delta}\|_{(\H^1(\R),\|\cdot\|_*)\to (\H^1(\R),\|\cdot\|_*)},$$
	and thus
	$$\int_0^1 \varphi(t)dt\leq \|H_{\varphi}\|_{(\H^1(\R),\|\cdot\|_*)\to (\H^1(\R),\|\cdot\|_*)}$$
	since 
	$$\|H_{\varphi}- H_{\varphi_\delta}\|_{(\H^1(\R),\|\cdot\|_*)\to (\H^1(\R),\|\cdot\|_*)}\lesssim \int_0^\infty (\varphi(t)-\varphi_\delta(t))dt= \int_0^\delta \varphi(t)dt\to 0$$
	 as $\delta \to 0$. This ends the proof of Lemma \ref{a key lemma for real Hausdorff operators}.	
\end{proof}

\begin{proof}[Proof of Theorem \ref{a lower bound}]
	It follows from \cite[Theorem 3.3]{HKQ} that 
	$$\int_0^\infty \varphi(t)dt <\infty.$$
	
	For any $m>0$, let $\varphi_m$ be as in the proof of Theorem \ref{main theorem}. Then, by (\ref{an upper bound for the norm}),
	\begin{eqnarray}\label{approximates}
		&&\left\|H_\varphi - H_{\varphi_m(\frac{\cdot}{m})}\right\|_{(\H^1(\R),\|\cdot\|_*)\to (\H^1(\R),\|\cdot\|_*)} \\
		&=& \left\|H_{\varphi- \varphi_m(\frac{\cdot}{m})}\right\|_{(\H^1(\R),\|\cdot\|_*)\to (\H^1(\R),\|\cdot\|_*)}\nonumber\\
		&\lesssim& \int_0^\infty \left[\varphi(t)-\varphi_m(\frac{t}{m})\right]dt =\int_m^\infty \varphi(t)dt,\nonumber
	\end{eqnarray}
	where the constant is independent of $m$. 
	
	Noting that
	$$\left\|f\left(\frac{\cdot}{m}\right)\right\|_{*}= m \|f(\cdot)\|_{*}\quad\mbox{and}\quad  H_{\varphi_m\left(\frac{\cdot}{m}\right)} f= H_{\varphi_m} f\left(\frac{\cdot}{m}\right)$$
	for all $f\in \H^1(\R)$, Lemma \ref{a key lemma for real Hausdorff operators} gives
	\begin{eqnarray*}
		\left\|H_{\varphi_m\left(\frac{\cdot}{m}\right)}\right\|_{(\H^1(\R),\|\cdot\|_*)\to (\H^1(\R),\|\cdot\|_*)} &=& m \left\|H_{\varphi_m}\right\|_{(\H^1(\R),\|\cdot\|_*)\to (\H^1(\R),\|\cdot\|_*)}\\
		&\geq& m \int_0^1  \varphi_m(t)dt = \int_0^m  \varphi(t)dt.
	\end{eqnarray*}
	This, together with (\ref{approximates}) and $\lim\limits_{m\to\infty} \int_m^\infty \varphi(t)dt=0$, allows us to conclude that
	$$\int_0^\infty \varphi(t)dt\leq \left\|H_\varphi\right\|_{(\H^1(\R),\|\cdot\|_*)\to (\H^1(\R),\|\cdot\|_*)}.$$
	
	Using the Fourier transform, Liflyand and M\'oricz proved in \cite{LM2} that $H_\varphi$ commutes with the Hilbert transform $H$ on $\H^1(\R)$. However, we also would like to give a new proof of this fact here. It suffices to prove 
	\begin{equation}\label{commutes with the Hilbert transform at p=1}
		H_\varphi(H(f))= H(H_\varphi(f))
	\end{equation}
	for all real-valued functions $f$ in $\H^1(\R)$. Indeed, by Theorem \ref{main theorem} and Lemma \ref{boundary value characterization}, there exists a real-valued function $g$ in $\H^1(\R)$ such that
	$$g + i H(g)= H_\varphi(f+ i H(f)).$$
	This proves (\ref{commutes with the Hilbert transform at p=1}), and thus completes the proof of Theorem \ref{a lower bound}.

\end{proof}

Let $a: (0,\infty) \to [0,\infty)$ be a measurable function. Following  Carro and Ortiz-Caraballo \cite{CO}, we define
$$\mathscr S_a F(z)=\int_0^\infty F(tz)a(t)dt,\quad z\in \mathbb C_+,$$
for all holomorphic functions $F$ on $\mathbb C_+$; and define
$$S_a f(x)=\int_0^\infty f(tx) a(t)dt, \quad x\in \R,$$
for all measurable functions $f$ on $\R$.

It is easy to see that
$$\mathscr S_a F= \H_{\varphi}F \quad\mbox{and}\quad S_a f= H_\varphi f,$$
where $\varphi(t)= t^{-1}a(t^{-1})$ for all $t\in (0,\infty)$. Hence, it follows from Theorems \ref{main theorem}, \ref{commutes with the Hilbert transform} and \ref{a lower bound} that:

\begin{theorem}\label{a theorem for adjointed operators}
	Let $p\in [1,\infty]$ and let  $a: (0,\infty) \to [0,\infty)$ be a measurable function. Then $\mathscr S_a$ is bounded on $\H_a^p(\mathbb C_+)$ if and only if 
	\begin{equation}\label{an inequality for adjoint operators}
	\int_0^\infty t^{-1/p} a(t)dt<\infty.
	\end{equation}
	Moreover, when (\ref{an inequality for adjoint operators}) holds, we obtain
	$$
	\|\mathscr S_a\|_{\H^p_a(\mathbb C_+)\to \H^p_a(\mathbb C_+)}=\int_0^\infty t^{-1/p} a(t)dt
	$$
	and, for any $f\in \H_a^p(\mathbb C_+)$,
	$$
	(\mathscr S_a f)^*= S_a(f^*).
	$$
\end{theorem}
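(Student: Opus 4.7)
The plan is to reduce Theorem \ref{a theorem for adjointed operators} directly to Theorem \ref{main theorem} by the change of variables $s=1/t$. The excerpt already observes that, on setting $\varphi(t):=t^{-1}a(t^{-1})$ for $t\in(0,\infty)$, one has $\mathscr S_a F = \Hau F$ and $S_a f = H_\varphi f$. The first step is to verify this identification rigorously: a single substitution $s=1/t$ in the defining integral of $\Hau F$ produces $\mathscr S_a F$, with the interchange of $t$-integration and limits justified via the dominated convergence theorem and the pointwise bound \eqref{a pointwise estimate for f} on compact subsets of $\mathbb C_+$. Along the way one checks that $\varphi\in L^1_{\rm loc}(0,\infty)$ whenever $a$ is measurable and the condition $\int_0^\infty t^{-1/p}a(t)\,dt<\infty$ holds, so that the hypothesis of Theorem \ref{main theorem} is genuinely satisfied.

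Next, I translate the integrability condition of Theorem \ref{main theorem} into that of Theorem \ref{a theorem for adjointed operators}. Under $s=1/t$ we have $dt=-s^{-2}\,ds$, $t^{1/p-1}=s^{1-1/p}$, and $\varphi(t)=s\,a(s)$, so
\[
\int_0^\infty t^{1/p-1}\varphi(t)\,dt \;=\; \int_0^\infty s^{-1/p}a(s)\,ds,
\]
with the convention $s^0\equiv 1$ when $p=\infty$. In particular the two finiteness conditions are equivalent, and both integrals agree numerically.

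The final step is to invoke Theorem \ref{main theorem}, which supplies simultaneously: both directions of the equivalence for $\Hau$; the norm identity $\|\Hau\|_{\Hd\to\Hd}=\int_0^\infty t^{1/p-1}\varphi(t)\,dt$; and the boundary value identity $(\Hau f)^*=H_\varphi(f^*)$. Combining these with $\mathscr S_a=\Hau$, $S_a=H_\varphi$, and the integral identity from the previous paragraph yields, in one stroke, the equivalence of boundedness of $\mathscr S_a$ on $\Hd$ with the finiteness of $\int_0^\infty t^{-1/p}a(t)\,dt$, the norm formula $\|\mathscr S_a\|_{\Hd\to\Hd}=\int_0^\infty t^{-1/p}a(t)\,dt$, and the boundary identity $(\mathscr S_a f)^*=S_a(f^*)$.

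There is essentially no mathematical obstacle beyond careful bookkeeping; the proof is a transport-of-structure argument. The two things to watch are (a) the legality of interchanging the $t$-integral with the passage $y\to 0$ to the boundary, handled exactly as in the proof of Lemma \ref{a lemma for p is infinity} via dominated convergence, and (b) the endpoint $p=\infty$, where $t^{-1/p}$ degenerates to $1$ but the substitution and the $p=\infty$ case of Theorem \ref{main theorem} (Lemma \ref{a lemma for p is infinity}) go through unchanged. No fresh estimates are required.
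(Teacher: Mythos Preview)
Your proposal is correct and follows essentially the same route as the paper: the paper simply records the identity $\mathscr S_a=\mathscr H_\varphi$ with $\varphi(t)=t^{-1}a(t^{-1})$ and then states Theorem~\ref{a theorem for adjointed operators} as an immediate consequence of Theorem~\ref{main theorem}. Your write-up is merely a more detailed spelling-out of this transport-of-structure argument (including the verification that the integrability conditions match under $s=1/t$ and that $\varphi\in L^1_{\rm loc}(0,\infty)$), with no substantive difference in strategy.
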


\begin{theorem}\label{commutes with the Hilbert transform for adjointed operator}
	Let $p\in (1,\infty)$ and let $a$ be as in Theorem \ref{a theorem for adjointed operators}. Then $S_a$ is bounded on $L^p(\R)$ if and only if (\ref{an inequality for adjoint operators}) holds. Moreover, in that case, 
	$$\|S_a\|_{L^p(\R)\to L^p(\R)}= \int_0^\infty t^{-1/p} a(t)dt$$
	and $S_a$ commutes with the Hilbert transform $H$ on $L^p(\R)$.    
\end{theorem}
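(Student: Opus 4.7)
The plan is to reduce Theorem \ref{commutes with the Hilbert transform for adjointed operator} to Theorem \ref{commutes with the Hilbert transform} via the change of variable $\varphi(t) := t^{-1}a(t^{-1})$, which the paper already highlights as the bridge between the dilation-averaging operators $S_a$, $\mathscr S_a$ and the Hausdorff operators $H_\varphi$, $\mathscr H_\varphi$. The work splits into two tiny calculations plus one invocation of the previous theorem.

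First, I would verify the identity $S_a f = H_\varphi f$ for all measurable $f$ on $\R$ (and hence for all $f\in L^p(\R)$ once integrability is justified). Setting $s=1/t$ in
\[
H_\varphi f(x)=\int_0^\infty f\!\left(\frac{x}{t}\right)\frac{\varphi(t)}{t}\,dt=\int_0^\infty f\!\left(\frac{x}{t}\right)\frac{a(1/t)}{t^2}\,dt,
\]
so that $dt=-ds/s^2$, turns the integral into $\int_0^\infty f(xs)\,a(s)\,ds=S_a f(x)$. Running the same substitution on the norm integral gives
\[
\int_0^\infty t^{1/p-1}\varphi(t)\,dt=\int_0^\infty t^{1/p-2}a(1/t)\,dt=\int_0^\infty s^{-1/p}a(s)\,ds,
\]
so condition \eqref{an inequality for adjoint operators} is literally condition \eqref{main inequality} for the function $\varphi$ defined above, and the two putative norm values coincide.

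Second, with these two identifications in hand, Theorem \ref{commutes with the Hilbert transform} applied to this $\varphi$ yields directly: $S_a=H_\varphi$ is bounded on $L^p(\R)$ if and only if $\int_0^\infty t^{-1/p}a(t)\,dt<\infty$; in that case
\[
\|S_a\|_{L^p(\R)\to L^p(\R)}=\|H_\varphi\|_{L^p(\R)\to L^p(\R)}=\int_0^\infty t^{1/p-1}\varphi(t)\,dt=\int_0^\infty t^{-1/p}a(t)\,dt;
\]
and $S_a=H_\varphi$ commutes with the Hilbert transform $H$ on $L^p(\R)$.

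There is no genuine obstacle here; the only point worth a sentence of care is the measurability/local integrability of $\varphi(t)=t^{-1}a(t^{-1})$ on $(0,\infty)$, which follows immediately from the measurability of $a$ and from the finiteness of $\int_0^\infty t^{-1/p}a(t)\,dt$ (which, via the above substitution, is finiteness of $\int_0^\infty t^{1/p-1}\varphi(t)\,dt$ and hence in particular implies $\varphi\in L^1_{\rm loc}(0,\infty)$); this places $\varphi$ in the hypothesis class of Theorem \ref{commutes with the Hilbert transform}, so the transfer is legitimate and the proof is complete.
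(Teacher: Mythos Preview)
Your proposal is correct and follows exactly the paper's own approach: the paper simply records the identity $S_a f = H_\varphi f$ with $\varphi(t)=t^{-1}a(t^{-1})$ and then states that the theorem follows from Theorem~\ref{commutes with the Hilbert transform}. Your write-up merely makes the substitution and the matching of the integral conditions explicit, which is all that is needed.
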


\begin{theorem}\label{commutes with the Hilbert transform on real Hardy space for adjointed operator}
	Let $a$ be as in Theorem \ref{a theorem for adjointed operators}. Then $S_a$ is bounded on $\H^1(\R)$ if and only if $\int_0^\infty t^{-1} a(t)dt<\infty.$
	Moreover, in that case,
	$$\int_0^\infty t^{-1} a(t)dt\leq \|S_a\|_{\H^1(\R)\to \H^1(\R)}<\infty$$
	and $S_a$ commutes with the Hilbert transform $H$ on $\H^1(\R)$.
\end{theorem}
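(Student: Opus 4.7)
The plan is to reduce Theorem~\ref{commutes with the Hilbert transform on real Hardy space for adjointed operator} to Theorem~\ref{a lower bound} via the identification $S_a f = H_\varphi f$ with $\varphi(t) = t^{-1}a(t^{-1})$ that is recorded just before the statement. The first thing I would do is verify that the two integrability conditions match: the substitution $u = 1/t$ in
$$\int_0^\infty \varphi(t)\,dt = \int_0^\infty t^{-1} a(t^{-1})\,dt$$
yields $\int_0^\infty u^{-1} a(u)\,du$, so $\int_0^\infty \varphi(t)\,dt < \infty$ if and only if $\int_0^\infty t^{-1} a(t)\,dt < \infty$, with equality of the two integrals. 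This is the only calculation needed to move between the $\varphi$-formulation and the $a$-formulation.

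With this identification in hand, I would handle the two directions of the equivalence separately. For sufficiency, if $\int_0^\infty t^{-1} a(t)\,dt < \infty$ then $\int_0^\infty \varphi(t)\,dt < \infty$, and the classical $\H^1(\R)$-boundedness of $H_\varphi$ from \cite{An, HKQ, LM} (the very fact invoked at the start of the proof of Lemma~\ref{a key lemma for real Hausdorff operators}) tells us that $S_a = H_\varphi$ is bounded on $\H^1(\R)$. For necessity, assume $S_a$ is bounded on $\H^1(\R)$; then so is $H_\varphi$, and Theorem~\ref{a lower bound} delivers
$$\int_0^\infty \varphi(t)\,dt \leq \|H_\varphi\|_{\H^1(\R)\to\H^1(\R)} < \infty,$$
which, after applying the change of variables above and using $S_a = H_\varphi$, becomes
$$\int_0^\infty t^{-1} a(t)\,dt \leq \|S_a\|_{\H^1(\R)\to \H^1(\R)} < \infty,$$
as required.

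Finally, the commutation of $S_a$ with the Hilbert transform on $\H^1(\R)$ is immediate from the commutation of $H_\varphi$ with $H$ on $\H^1(\R)$ that forms the last clause of Theorem~\ref{a lower bound}, since $S_a$ and $H_\varphi$ are literally the same operator. I do not anticipate any genuine obstacle: the whole argument is a change-of-variables identification plus a citation of the already-proved Theorem~\ref{a lower bound}, and the only point of care is making sure that $u = 1/t$ is substituted consistently on both sides of every norm inequality so that the bounds transfer cleanly.
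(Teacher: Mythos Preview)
Your proposal is correct and follows essentially the same approach as the paper: the paper derives Theorem~\ref{commutes with the Hilbert transform on real Hardy space for adjointed operator} directly from Theorem~\ref{a lower bound} via the identification $S_a = H_\varphi$ with $\varphi(t)=t^{-1}a(t^{-1})$, which is exactly what you do (your explicit verification of the change of variables and your separate citation for the sufficiency direction are, if anything, slightly more careful than the paper's one-line deduction).
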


Also it is easy to see that if (\ref{main inequality}) holds for $1<p<\infty$, then
$$\int_{\R} H_\varphi f(x) g(x)dx= \int_{\R} f(x) S_\varphi g(x)dx$$
whenever $f\in L^p(\R)$ and $g\in L^q(\R)$, $q=p/(p-1)$. Namely, $S_\varphi$ can be viewed as the Banach space adjoint of $H_\varphi$ and vice versa. Therefore, by Theorem \ref{a lower bound}, Theorem \ref{commutes with the Hilbert transform on real Hardy space for adjointed operator} and \cite[Theorem 1]{LL}, a duality argument gives:

\begin{theorem}
	\begin{enumerate}[\rm (i)]
		\item $S_\varphi$ is bounded on $BMO(\R)$ if and only if $\int_0^\infty \varphi(t)dt<\infty$. Moreover, in that case,
		$$\|S_\varphi\|_{BMO(\R)\to BMO(\R)}=\int_0^\infty \varphi(t)dt.$$
		\item $H_\varphi$ is bounded on $BMO(\R)$ if and only if $\int_0^\infty t^{-1}\varphi(t)dt<\infty$. Moreover, in that case,
		$$\|H_\varphi\|_{BMO(\R)\to BMO(\R)}=\int_0^\infty t^{-1}\varphi(t)dt.$$
	\end{enumerate}
\end{theorem}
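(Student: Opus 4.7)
The plan is to deduce this theorem by a Banach space duality argument, using the Fefferman--Stein identification $(\H^1(\R))^* = BMO(\R)$ together with the formal adjoint relation
$$\int_\R H_\varphi f(x)\, g(x)\,dx = \int_\R f(x)\, S_\varphi g(x)\,dx,$$
already noted in the excerpt. On $L^p$--$L^q$ this identity comes for free from Fubini; the substance of the proof lies in promoting it to the canonical $\langle \cdot, \cdot \rangle : \H^1(\R) \times BMO(\R) \to \bC$ pairing and extracting norm equalities.

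First I would verify the adjoint identity on a dense subclass. Taking $f$ to be an $\H^1$-atom supported on a bounded interval and $g\in BMO(\R)$, the function $H_\varphi f$ is an integrable $\H^1$ function and $S_\varphi g$ is well-defined almost everywhere provided (i) or (ii) holds. Fubini's theorem (which applies because the atomic structure ensures absolute integrability after the change of variables $u = x/t$) gives the adjoint identity on this dense subset. Hence the Banach-space adjoint of $H_\varphi : \H^1(\R) \to \H^1(\R)$ is precisely $S_\varphi : BMO(\R) \to BMO(\R)$, and symmetrically for the roles reversed. Standard duality $\|T^*\|_{X^* \to X^*} = \|T\|_{X \to X}$ then yields
$$\|S_\varphi\|_{BMO \to BMO} = \|H_\varphi\|_{\H^1 \to \H^1}, \qquad \|H_\varphi\|_{BMO \to BMO} = \|S_\varphi\|_{\H^1 \to \H^1},$$
whenever the relevant norm is chosen compatibly with the $BMO$ duality (i.e.\ the atomic norm, or equivalently the Poisson maximal function norm, as listed in Proposition \ref{some equivalent characterizations of H1}).

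To conclude part (i), Theorem \ref{a lower bound} gives both the necessity of $\int_0^\infty \varphi(t)\,dt < \infty$ and the lower bound $\int_0^\infty \varphi(t)\,dt \leq \|H_\varphi\|_{\H^1 \to \H^1}$, while the matching upper bound $\|H_\varphi\|_{\H^1 \to \H^1} \leq \int_0^\infty \varphi(t)\,dt$ is exactly \cite[Theorem 1]{LL}. Combining gives equality for $\|H_\varphi\|_{\H^1 \to \H^1}$, which by the duality step above transfers to the claimed identity for $\|S_\varphi\|_{BMO \to BMO}$. For part (ii), the same argument applies with the roles of $H_\varphi$ and $S_\varphi$ swapped, using Theorem \ref{commutes with the Hilbert transform on real Hardy space for adjointed operator} (where the substitution $\varphi(t)=t^{-1}a(t^{-1})$ converts the condition $\int_0^\infty t^{-1}\varphi(t)\,dt < \infty$ into $\int_0^\infty a(t)\,dt<\infty$) to supply the analogous norm identity for $S_\varphi$ on $\H^1$.

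The main obstacle I expect is making the adjoint identification rigorous at the level of $\H^1$--$BMO$ duality. Since a $BMO$ function is generally not integrable, the identity cannot be written as an $L^1$ pairing, and one must work via atoms or compactly supported mean-zero functions, verifying (a) that $S_\varphi g$ lies in $BMO$ with the right norm control when $g\in BMO$ and the integral condition holds, and (b) that the duality bracket is preserved by $H_\varphi$ and $S_\varphi$ so that the Banach-space adjoint really is the pointwise operator claimed. A secondary subtlety is choosing the $\H^1$ norm consistently with the $BMO$ norm so that $\|T^*\| = \|T\|$ holds with equality (not merely equivalence), which forces the use of the atomic $\H^1$ norm throughout.
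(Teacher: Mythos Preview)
Your approach is essentially the paper's: the authors state the theorem as following from Theorem \ref{a lower bound}, Theorem \ref{commutes with the Hilbert transform on real Hardy space for adjointed operator} and \cite[Theorem 1]{LL} ``by a duality argument'', with no further detail, and your proposal is a faithful unpacking of exactly that scheme.

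One caveat on the role you assign to \cite[Theorem 1]{LL}. You invoke it for the sharp $\H^1$ upper bound $\|H_\varphi\|_{\H^1\to\H^1}\le\int_0^\infty\varphi(t)\,dt$, but the paper itself only records this inequality with an implicit constant (see the proof of Lemma \ref{a key lemma for real Hausdorff operators}), so the exact equality $\|H_\varphi\|_{\H^1\to\H^1}=\int\varphi$ is nowhere established here. More plausibly \cite[Theorem 1]{LL} supplies the $BMO$ upper bound directly---this is immediate from the dilation invariance $\|g(t\,\cdot)\|_{BMO}=\|g\|_{BMO}$ and Minkowski---after which the lower bound is what duality with Theorems \ref{a lower bound} and \ref{commutes with the Hilbert transform on real Hardy space for adjointed operator} is meant to provide. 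You are right, however, that the Fefferman--Stein isomorphism $BMO\simeq(\H^1)^*$ carries implicit constants, so the transfer $\|S_\varphi\|_{BMO\to BMO}=\|H_\varphi\|_{\H^1\to\H^1}$ is only an equivalence, not an equality, and your last paragraph correctly isolates this as the genuine obstacle. The paper does not address it either; a clean way around it is to bypass duality for the lower bound and argue directly on $BMO$ using the test function $\log|x|$ (after truncating $\varphi$ to compact support in $(0,\infty)$ and letting the truncation exhaust, exactly as in the paper's $\H^1$ arguments), but this goes beyond what either you or the paper have written.
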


Here the space $BMO(\R)$ (see \cite{FS, JN}) is the dual space of $\H^1(\R)$ defined as the space of all functions $f\in L^1_{\rm loc}(\R)$ such that
$$\|f\|_{BMO(\R)}:=\sup_{B}\frac{1}{|B|}\int_B \left|f(x)-\frac{1}{|B|}\int_B f(y)dy\right|dx<\infty,$$
where the supremum is taken over all intervals $B\subset\R$. 

\vskip 0.5cm

{\bf Acknowledgements.}  The authors would like to thank the referees for their
carefully reading and helpful suggestions.


\begin{thebibliography}{9}
	
\bibitem{An}  K. F. Andersen, Boundedness of Hausdorff operators on $L^p(\R^n)$, $H^1(\R^n)$, and $BMO(\R^n)$. Acta Sci. Math. (Szeged) 69 (2003), no. 1-2, 409-418.
	
\bibitem{AS} A. G. Arvanitidis and  A. G. Siskakis, Ces\`aro operators on the Hardy spaces of the half-plane. Canad. Math. Bull. 56 (2013), no. 2, 229--240. 

\bibitem{BBMM}  S. Ballamoole,   J. O. Bonyo, T. L. Miller and V. G. Miller, Ces\`aro-like operators on the Hardy and Bergman spaces of the half plane. Complex Anal. Oper. Theory 10 (2016), no. 1, 187--203. 

\bibitem{BHS} A. Brown, P. Halmos and A. Shields, Ces\`aro operators. Acta Sci. Math. (Szeged) 26 (1965), 125--137.


\bibitem{CO} M. J. Carro and C. Ortiz-Caraballo, Boundedness of integral operators on decreasing functions. Proc. Roy. Soc. Edinburgh Sect. A 145 (2015), no. 4, 725--744. 


\bibitem{Du} P. L. Duren,  Theory of $H^p$ spaces. Pure and Applied Mathematics, Vol. 38 Academic Press, New York-London, 1970.


\bibitem{FS} C. Fefferman and E. M. Stein,  $H^p$ spaces of several variables. Acta Math. 129 (1972), no. 3-4, 137--193. 

\bibitem{GP} P. Galanopoulos and M. Papadimitrakis,  Hausdorff and quasi-Hausdorff matrices on spaces of analytic functions. Canad. J. Math. 58 (2006), no. 3, 548--579. 

\bibitem{Ga} J. B. Garnett,  Bounded analytic functions. Revised first edition. Graduate Texts in Mathematics, 236. Springer, New York, 2007. 

\bibitem{HKQ} H. D. Hung, L. D. Ky and T. T. Quang, Norm of the Hausdorff operator on the real Hardy space $H^1(\R)$, Complex Anal. Oper. Theory (2017). doi:10.1007/s11785-017-0651-y.


\bibitem{JN} F. John and L. Nirenberg, On functions of bounded mean oscillation. Comm. Pure Appl. Math. 14 (1961), 415--426.

\bibitem{LL} A. K. Lerner and E.  Liflyand, Multidimensional Hausdorff operators on the real Hardy space. J. Aust. Math. Soc. 83 (2007), no. 1, 79--86. 

\bibitem{Li07} E. Liflyand, Open problems on Hausdorff operators. Complex analysis and potential theory, 280--285, World Sci. Publ., Hackensack, NJ, 2007. 

\bibitem{Li08} E. Liflyand, Boundedness of multidimensional Hausdorff operators on $H^1(\R^n)$. Acta Sci. Math. (Szeged). 74 (2008), 845--851.

\bibitem{Li} E. Liflyand,  Hausdorff operators on Hardy spaces. Eurasian Math. J. 4 (2013), no. 4, 101--141.

\bibitem{LM} E. Liflyand and F. M\'oricz, The Hausdorff operator is bounded on the real Hardy space $H^1(\R)$. Proc. Amer. Math. Soc. 128 (2000), no. 5, 1391--1396.

\bibitem{LM2} E. Liflyand and F. M\'oricz, Commuting relations for Hausdorff operators and Hilbert transforms on real Hardy spaces.  Acta Math. Hungar. 97 (2002), no. 1-2, 133--143. 

\bibitem{QXYYY} T. Qian, Y. Xu, D. Yan, L. Yan and  B. Yu,  Fourier spectrum characterization of Hardy spaces and applications. Proc. Amer. Math. Soc. 137 (2009), no. 3, 971--980.

\bibitem{St} E. M. Stein, Harmonic analysis: real-variable methods, orthogonality, and oscillatory integrals. Princeton University Press, Princeton, NJ, 1993.

\bibitem{Xi}  J. Xiao, $L^p$ and $BMO$ bounds of weighted Hardy-Littlewood averages.  J. Math. Anal. Appl. 262 (2001), no. 2, 660--666.

\end{thebibliography}
\end{document}